\newcommand{\ZFC}{{\rm ZFC}}
\renewcommand{\emptyset}{\varnothing}
\newcommand{\R}{{\mathcal R}}
\renewcommand{\O}{{\mathcal O}}
\newcommand{\I}{{\mathcal I}}
\newcommand{\Part}{{\mathcal P}art}
\newcommand{\Ram}{{\mathcal R}am}
\newcommand{\pre}{{\rm pre}}
\newcommand{\restrict}{\upharpoonright}
\newcommand{\concat}{\mathbin{{}^\smallfrown}}
\newcommand{\<}{\langle}
\renewcommand{\>}{\rangle}
\newcommand{\st}{\mid}
\newcommand{\cf}{\mathop{\rm cf}}
\newcommand{\NS}{{\mathop{\rm NS}}}
\newcommand{\NSS}{{\mathop{\rm NSS}}}
\newcommand{\NSSub}{{\mathop{\rm NSSub}}}
\renewcommand{\and}{\mathop{\&}}
\newcommand{\strsub}{\prec}
\newcommand{\strsubeq}{\preceq}
\newtheorem{theorem}{Theorem}[section]
\newtheorem{lemma}[theorem]{Lemma}
\newtheorem{corollary}[theorem]{Corollary}
\newtheorem{proposition}[theorem]{Proposition}
\theoremstyle{definition}
\newtheorem{remark}[theorem]{Remark}
\newtheorem{definition}[theorem]{Definition}
\thanks{The authors would like to thank the anonymous referee for many helpful comments which greatly improved the article.}
\date{\today}
\begin{document}

\title{Two-cardinal ideal operators and indescribability}

\author[Brent Cody]{Brent Cody}
\address[Brent Cody]{ 
Virginia Commonwealth University,
Department of Mathematics and Applied Mathematics,
1015 Floyd Avenue, PO Box 842014, Richmond, Virginia 23284, United States
} 
\email[B. ~Cody]{bmcody@vcu.edu} 
\urladdr{http://www.people.vcu.edu/~bmcody/}

\author[Philip White]{Philip White}
\address[Philip White]{ 
George Washington University,
Department of Mathematics,
801 22nd St. NW, Room 739,
Washington, DC 20052
} 
\email[P. ~White]{pwhite30@gwu.edu} 

\begin{abstract}
A well-known version of Rowbottom's theorem for supercompactness ultrafilters leads naturally to notions of two-cardinal Ramseyness and corresponding normal ideals introduced herein. Generalizing results of Baumgartner, Feng and the first author, from the cardinal setting to the two-cardinal setting, we study hierarchies associated with a particular version of two-cardinal Ramseyness and a strong version of two-cardinal ineffability, as well as the relationships between these hierarchies and a natural notion of transfinite two-cardinal indescribability.
\end{abstract}


\subjclass[2010]{Primary 03E55; Secondary 03E05}

\keywords{}

\maketitle





\section{Introduction}\label{section_introduction}

One version of Ramsey's famous combinatorial theorem states that for every function $f:[\omega]^{2}\to 2$ there is an infinite set $H\subseteq\omega$ such that $H$ is \emph{homogeneous for $f$}, in the sense that $f\restrict[H]^2$ is constant. Since the work of Erd\H{o}s, Hajnal, Tarski, Rado and others \cite{MR95124, MR0065615, MR0008249, MR81864}, it has been well-known that certain generalizations of Ramsey's theorem to uncountable sets necessarily involve large cardinals. For example, we say that $\kappa>\omega$ is an \emph{ineffable cardinal} if for every function $f:[\kappa]^2\to\kappa$ with $f(a)<\min(a)$ for all $a\in[\kappa]^{2}$, there is an $H\subseteq\kappa$ that is stationary in $\kappa$ and homogeneous for $f$. Similarly, we say that $\kappa>\omega$ is a \emph{Ramsey cardinal} if for every function $f:[\kappa]^{<\omega}\to \kappa$ with $f(a)<\min(a)$ for all $a\in[\kappa]^{<\omega}$, there is a set $H\subseteq\kappa$ of size $\kappa$ that is homogeneous for $f$, that is $f\restrict[H]^n$ is constant for each $n<\omega$. The notions of ineffability and Ramseyness of cardinals leads naturally to the following definitions of the ineffability ideal operator $\I$ and the Ramsey ideal operator $\R$.

Suppose $\kappa$ is a regular cardinal and $I$ is an ideal on $\kappa$. We let $I^+=\{X\subseteq\kappa\st X\notin I\}$ be the corresponding collection of $I$-positive sets and $I^*=\{X\subseteq\kappa\st \kappa\setminus X\in I\}$ be the filter which is dual to $I$. We define new ideals $\I(I)$ and $\R(I)$ as follows. A set $X\subseteq\kappa$ is not in $\I(I)$ if and only if for every function $f:[X]^2\to\kappa$ with $f(a)<\min(a)$ for all $a\in[X]^2$, there is a set $H\in P(X)\cap I^+$ such that $f\restrict[H]^2$ is constant. Similarly, a set $X\subseteq\kappa$ is not in $\R(I)$ if and only if for every function $f:[X]^{<\omega}\to\kappa$ with $f(a)<\min(a)$ for all $a\in [X]^{<\omega}$, there is a set $H\in P(X)\cap I^+$ such that $f\restrict[H]^n$ is constant for all $n<\omega$. It follows from the work of Baumgartner \cite{MR0384553} that if $I\supseteq[\kappa]^{<\kappa}$ then $\I(I)$ is a normal ideal. The corresponding result for $\R(I)$ also holds, and is due to Feng \cite{MR1077260}.

By repeatedly applying the ideal operators $\I$ and $\R$ to various ideals, one is led naturally to consider the \emph{ineffability hierarchy} \cite{MR0540770} and the \emph{Ramsey hierarchy} \cite{MR1077260}. That is, if $\kappa$ is regular, $I$ is an ideal on $\kappa$ and $\O\in\{\I,\R\}$, we inductively define new ideals by letting
\begin{align*}
\O^0(I)&=I,\\
\O^{\alpha+1}(I)&=\O(\O^\alpha(I))\text{, and}\\
\O^\alpha(I)&=\bigcup_{\beta<\alpha}\O^\beta(I)\text{ when $\alpha$ is a limit.}
\end{align*}

We say that $\kappa$ is \emph{$\alpha$-ineffable} if and only if the ideal $\I^\alpha(\NS_\kappa)$ is nontrivial, that is, $\I^\alpha(\NS_\kappa)\neq P(\kappa)$.
Similarly, $\kappa$ is \emph{$\alpha$-Ramsey} if and only if the ideal $\R^\alpha([\kappa]^{<\kappa})$ is nontrivial.

The hierarchies of $\alpha$-ineffable and $\alpha$-Ramsey cardinals, and their relationship with various notions of indescribability \cite{MR0281606} have been extensively studied by Baumgartner \cite{MR0384553, MR0540770}, Feng \cite{MR1077260}, as well as the first author \cite{MR4206111} and the first author and Peter Holy \cite{MR4594301}. Although there is an extensive literature on two-cardinal combinatorial properties involving various notions of subtlety and ineffability \cite{MR1635559, MR2210149, MR2948437, MR2632006, MR854519, MR0325397, MR1864930, MR327518, MR2846027, MR3066742, MR3472180}, much less is known about two-cardinal analogues of Ramsey properties.

In this article, we introduce a well-behaved two-cardinal version of the Ramsey operator and generalize many results from the literature to our two-cardinal Ramsey operator as well as to a two-cardinal ineffable operator previously studied by Kamo \cite{MR1419442} as well as Abe and Usuba \cite{MR2948437}.

\section{Two-cardinal ideal operators associated to ineffability and partition properties}

\subsection{Stationarity, strong stationarity and strong normality}



Suppose $\kappa$ is regular and $A$ is a set of ordinals with $\kappa\leq|A|$. We write $P_\kappa A$ or $[A]^{<\kappa}$ to denote the collection of subsets of $A$ of cardinality less than $\kappa$. A set $S\subseteq P_\kappa A$ is \emph{unbounded in $P_\kappa A$} if for every $x\in P_\kappa A$ there is a $y\in S$ with $x\subseteq y$.  It is easy to see that the collection 
\[I_{\kappa, A}=\{X\subseteq P_\kappa A\st\text{$X$ is not unbounded}\}\]
is a nontrivial ideal on $P_\kappa A$. Moreover, $I_{\kappa,A}^+$ is the set of unbounded subsets of $P_\kappa A$ and the filter generated by the collection $\{\hat{x}\st x\in P_\kappa A\}$, where $\hat{x}=\{y\in P_\kappa A: x\subseteq y\}$, equals the filter $I_{\kappa,A}^*$ dual to $I_{\kappa,A}$. Also notice that because $\kappa$ is assumed to be regular, for any $\gamma<\kappa$ and any sequence $\<X_\alpha\st\alpha<\gamma\>$ with $A_\alpha\in I_{\kappa,A}$ for $\alpha<\gamma$, we have $\bigcup_{\alpha<\gamma}A_\alpha\in I_{\kappa,A}$.

Jech defined two-cardinal notions of closed unboundedness and stationarity as follows. A set $C\subseteq P_\kappa A$ is \emph{closed} if whenever $\{c_\alpha\st \alpha<\gamma\}$ is a $\subseteq$-increasing chain in $C$ of length less than $\kappa$, it follows that $\bigcup_{\alpha<\gamma} c_\alpha\in C$. A set $C\subseteq P_\kappa A$ is \emph{club in $P_\kappa A$} if it is closed and unbounded in $P_\kappa A$, and a set $S\subseteq P_\kappa A$ is \emph{stationary in $P_\kappa A$} if $S\cap C\neq\emptyset$ for all clubs $C$ in $P_\kappa A$. Jech showed that when $\kappa$ is regular the set
\[\NS_{\kappa,A}=\{X\subseteq P_\kappa A\st \text{$X$ is nonstationary}\}\]
is a nontrivial normal ideal on $P_\kappa A$, meaning that for every $S\in \NS_{\kappa,A}^+$ and every function $f:S\to \lambda$, with $f(x)\in x$ for all $x\in S$, there is a $T\subseteq S$ which is stationary in $P_\kappa A$ such that $f\restrict T$ is constant. It is easy to see that $\NS_{\kappa,A}^*$ is the filter generated by the club subsets of $P_\kappa A$.

The ideals $I_{\kappa,A}$ and $\NS_{\kappa,A}$ are defined using the ordering $(P_\kappa A,\subseteq)$. When $\kappa$ is inaccessible it is often advantageous to work with a different ordering. If $x\in P_\kappa A$ we let $\kappa_x=|x\cap \kappa|$. For $x,y\in P_\kappa A$ we define $x\strsub y$ if and only if $x\subseteq y$ and $|x|<|y\cap \kappa|$ (equivalently $x\in P_{\kappa_y}y$). See \cite{MR1994835} for an introductory discussion of $\strsub$ and the notion of \emph{strong normality}, which we also define below. Notice that if $\kappa$ is inaccessible, a set $S\subseteq P_\kappa A$ is unbounded if and only if for every $x\in P_\kappa A$ there is a $y\in S$ with $x\strsub y$. Since we will focus on the case in which $\kappa$ is inaccessible, we don't loose anything by working with $I_{\kappa,\lambda}$ rather than its $\strsub$ counterpart. 

 We say that a set $C\subseteq P_\kappa A$ is a \emph{weak club in $P_\kappa A$} if $C$ is $\strsub$-unbounded in $P_\kappa A$ and whenever $C\cap P_{\kappa_x}x\in I_{\kappa_x,x}^+$, for some $x\in P_\kappa A$, we have $x\in C$. It is straightforward to see that when $C\subseteq P_\kappa A$ is a weak club in $P_\kappa A$ there is a function $f:P_\kappa A\to P_\kappa A$ such that the set
\[C_f:=\{x\in P_\kappa A\st x\cap\kappa\neq\emptyset\land f"P_{\kappa_x}x\subseteq P_{\kappa_x}x\}\]
is contained in $C$. Furthermore, for any such function $f$, the set $C_f$ is a weak club subset of $P_\kappa A$.

For $S\subseteq P_\kappa A$, a function $f:S\to P_\kappa A$ is said to be \emph{${\strsub}$-regressive on $S$} if $f(x)\strsub x$ for all $x\in S$. An ideal $I$ on $P_\kappa A$ is \emph{strongly normal} if for all $S\in I^+$ and all ${\strsub}$-regressive functions $f:S\to P_\kappa A$ there is a $T\in P(S)\cap I^+$ such that $f$ is constant on $T$. It is easy to see that an ideal $I$ on $P_\kappa A$ is strongly normal if and only if for every sequence $\vec{A}=\<A_x\st x\in P_\kappa A\>$ with $A_x\in I^*$ for all $x\in P_\kappa A$, the \emph{$\strsub$}-diagonal intersection 
\[\bigtriangleup_\strsub\{A_x\st x\in P_\kappa A\}=\{y\in P_\kappa A\st y\in \bigcap_{x\strsub y}A_x\}\]
is in $I^*$. A set $S\subseteq P_\kappa A$ is \emph{strongly stationary} if $S\cap C\neq\emptyset$ for all weak clubs $C\subseteq P_\kappa A$. Let us note that this definition of strongly stationary differs slightly from that used elsewhere in the literature, however, it is equivalent to the corresponding notions used in \cite{MR1074449,MR4082998} when $\kappa$ is a Mahlo cardinal (see \cite[Fact 2.1]{CLHZ2024}). The \emph{non--strongly stationary ideal on $P_\kappa A$} is the collection.
\[\NSS_{\kappa, A}=\{X\subseteq P_\kappa A\st\text{$X$ is not strongly stationary}\}.\]
When $\kappa$ is Mahlo, it follows that $\NS_{\kappa, A}\subsetneq\NSS_{\kappa, A}$ and that $\NSS_{\kappa, A}$ is the minimal strongly normal ideal (see \cite[Section 6]{MR1074449} or \cite[Corollary 2.3]{MR4082998}). See \cite[Section 3]{MR1297401} for information on the relationship between $\NSS_{\kappa,\lambda}$ and $\NS_{\kappa,\theta}$ when $\theta=\lambda^{<\kappa}$.

\subsection{Two-cardinal ideal operators associated to ineffability and partition properties}

Kamo \cite{MR1419442} studied several ideal operators associated to notions of two-cardinal ineffability and parition properties introduced by Jech \cite{MR0325397}. While the results of Jech \cite{MR0325397}, Menas \cite{MR2940301}, Magidor \cite{MR327518} and others focus on ineffability and partition properties defined using the ordering $(P_\kappa A,\subseteq)$, Kamo introduced similar notions defined using $(P_\kappa A,\strsub)$. Let us review the relevant definitions.

Given a set $S\subseteq P_\kappa A$, a sequence $\vec{S}=\<S_x\st x\in S\>$ is called an \emph{$(S,\subset)$-list} if $S_x\subseteq x$ for all $x\in S$, and $\vec{S}$ is called an \emph{$(S,\strsub)$-list} if $S_x\subseteq P_{\kappa_x} x$ for all $x\in S$. If $\vec{S}$ is an $(S,\subset)$-list, we say that $H\subseteq S$ is \emph{homogeneous for $\vec{S}$} if whenever $x,y\in H\cap S$ and $x\subsetneq y$ we have $S_x=S_y\cap x$. If $\vec{S}$ is an $(S,\strsub)$-list, we say that $H\subseteq S$ is \emph{homogeneous for $\vec{S}$} if whenever $x,y\in H\cap S$ and $x\strsub y$ we have $S_x=S_y\cap P_{\kappa_x}x$. In the following definitions we will consider $2$-colorings of sets of the form
\[[S]^2_\lhd=\{(x,y)\st x,y\in S\ \land\ x\lhd y\}\]
where $\lhd$ is some ordering on $P_\kappa A$. Here we want to consider colorings of sets of $\lhd$-increasing \emph{ordered} pairs because, in the cases we are interested in, namely $\lhd\in\{\subsetneq,\strsub\}$, it is often the case that large homogeneous sets for colorings of unordered pairs do not exist; see the paragraph after Definition 4.5 in \cite{MR0325397} for details. Given a function $f:[S]^2_\lhd\to 2$ we say that $H\subseteq S$ is \emph{homogeneous for $f$} if $f\restrict [H]^2_\lhd$ is a constant function.

\begin{definition}
Suppose $I$ is an ideal on $P_\kappa A$ and $\lhd$ is some ordering on $P_\kappa A$. We define new ideals $\I_\subset(I)$, $\I_\strsub(I)$ and $\Part_\lhd(I)$ on $P_\kappa A$ as follows.
\begin{enumerate}
\item $\I_\subset(I)$ is the ideal on $P_\kappa A$ such that $S\in \I_\subset(I)^+$ if and only if every $(S,\subset)$-list has a homogeneous set $H\subseteq S$ in $I^+$.
\item $\I_\strsub(I)$ is the ideal on $P_\kappa A$ such that $S\in\I_\strsub(I)^+$ if and only if every $(S,\strsub)$-list has a homogeneous set $H\subseteq S$ in $I^+$.
\item $\Part_\lhd(I)$ is the ideal on $P_\kappa A$ such that $S\in \Part_\lhd(I)^+$ if and only if every function $f:[S]^2_\lhd\to 2$ has a homogeneous set $H\subseteq S$ in $I^+$.
\end{enumerate}
\end{definition}

It is not too difficult to see that when $I$ is a normal ideal on $P_\kappa A$, it follows that $\I_\subset(I)$ and $\Part_\subset(I)$ are normal ideals and $\I_\strsub(I)$ and $\Part_\strsub(I)$ are strongly normal. Furthermore, if $I\supseteq I_{\kappa,A}$ is any ideal on $P_\kappa A$, then $\I_\strsub(I)$ is strongly normal (see the paragraph after Definition 3.1 in \cite{MR1419442}). For more details and related results see \cite[Section 3]{MR1419442}.

 Note that, in Jech's terminology, $\kappa$ is $\lambda$-ineffable if and only if $\I_\subset(\NS_{\kappa,\lambda})$ is a nontrivial ideal. Similarly, $S\subseteq P_\kappa A$ is an \emph{ineffable subset of $P_\kappa A$} if and only if $S\in \I(\NS_{\kappa,A})^+$. We say that $S$ has the \emph{$\lhd$-partition property} if $S\in\Part_\lhd(\NS_{\kappa,A})^+$. Johnsson \cite{MR1111312} showed that when $\cf(\lambda)\geq\kappa$, a set $S\subseteq P_\kappa A$ is ineffable if and only if it has the $\strsub$-partition property; hence $\I_\subset(\NS_{\kappa,A})=\Part_{\strsub}(\NS_{\kappa,A})$ (see \cite[Fact 1.13]{MR2948437}). The relationships between the operators $\I_\subset$, $\I_\strsub$, $\Part_\subsetneq$ and $\Part_\strsub$ have been further explored by Kamo \cite{MR1419442} as well as Abe and Usuba \cite{MR2948437}.

\subsection{A two-cardinal ideal operators associated to Ramseyness}

Suppose $\kappa$ is regular and $A$ is a set of ordinals with $\kappa\leq|A|$. Suppose $S\subseteq P_\kappa A$. Given a tuple $\vec{x}=(x_1,\ldots,x_n)\in S^n$, with $x_1\strsub\cdots\strsub x_n$, we will identify $\vec{x}$ with the $\strsub$-increasing enumeration of its entries. Given $S\subseteq P_\kappa A$ and $n<\omega$, we let
\[[S]^n_\strsub=\{(x_1,\ldots,x_n)\in S^n\st x_1\strsub\cdots\strsub x_n\}\]
and
\[[S]^{<\omega}_\strsub=\bigcup_{n<\omega}[S]^n_{\strsub}.\]

A function $f:[S]^{<\omega}_\strsub\to P_\kappa A$ is called \emph{${\strsub}$-regressive} if $f(x_1,\ldots,x_n)\strsub x_1$ for all $(x_1,\ldots,x_n)\in [S]^{<\omega}_\strsub$.

The following is a straightforward generalization of a standard fact about supercompactness ultrafilters which is used to prove that supercompact Prikry forcing satisfies the Prikry property (see \cite[Section 1.4]{MR2768695}).

\begin{proposition}
Suppose $U$ is a $\kappa$-complete normal fine ultrafilter on $P_\kappa\lambda$ and $f:[P_\kappa\lambda]^{<\omega}_\strsub\to P_\kappa\lambda$ is a $\strsub$-regressive function. Then there is a set $H\in U$ which is homogeneous for $f$, meaning that $f\restrict[H]^n_\strsub$ is constant for all $n<\omega$.
\end{proposition}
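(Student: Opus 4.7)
The plan is to prove by induction on $n<\omega$ the following statement: for every $\sqsubset$-regressive $g:[P_\kappa\lambda]^n_\sqsubset\to P_\kappa\lambda$, there is $H\in U$ with $g\restrict[H]^n_\sqsubset$ constant. Once this is done, applying it to the restriction $f_n=f\restrict[P_\kappa\lambda]^n_\sqsubset$ for each $n$ yields sets $H_n\in U$, and since $\kappa>\omega$ and $U$ is $\kappa$-complete, $H=\bigcap_{n<\omega}H_n\in U$ is homogeneous for $f$ in the desired sense.

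The base case $n=1$ is precisely the strong normality of $U$: a $\kappa$-complete normal fine ultrafilter on $P_\kappa\lambda$ concentrates on $\{x:|x|<\kappa_x\}$ and hence any $\sqsubset$-regressive function $P_\kappa\lambda\to P_\kappa\lambda$ is constant on a set in $U$. For the inductive step, assume the result for $n-1$ and fix a $\sqsubset$-regressive $g:[P_\kappa\lambda]^n_\sqsubset\to P_\kappa\lambda$. For each tuple $\vec{x}=(x_1,\ldots,x_{n-1})\in[P_\kappa\lambda]^{n-1}_\sqsubset$, the map $y\mapsto g(x_1,\ldots,x_{n-1},y)$, defined for $y\sqsupset x_{n-1}$, takes values in $P_{\kappa_{x_1}}x_1$, a set of size less than $\kappa$ (using that $\kappa$ is inaccessible). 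Partitioning by output value and applying $\kappa$-completeness of $U$, I obtain some $c_{\vec{x}}\sqsubset x_1$ and some $X_{\vec{x}}\in U$ on which the map is constantly $c_{\vec{x}}$.

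Next, for each $y\in P_\kappa\lambda$, I set $X'_y=\bigcap\{X_{\vec{x}}:\vec{x}\in[P_{\kappa_y}y]^{n-1}_\sqsubset\}$; because $|P_{\kappa_y}y|<\kappa$, the number of tuples $\vec{x}$ involved is less than $\kappa$, so $X'_y\in U$ by $\kappa$-completeness. Strong normality of $U$ (in the diagonal-intersection formulation from the excerpt) then gives $X=\bigtriangleup_\sqsubset\{X'_y:y\in P_\kappa\lambda\}\in U$. Define $h:[X]^{n-1}_\sqsubset\to P_\kappa\lambda$ by $h(\vec{x})=c_{\vec{x}}$; this is $\sqsubset$-regressive since $c_{\vec{x}}\sqsubset x_1$. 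By the inductive hypothesis, there is $H_n\in U$ with $H_n\subseteq X$ (intersect if necessary) such that $h\restrict[H_n]^{n-1}_\sqsubset$ is constantly some $c$. To check homogeneity, let $(x_1,\ldots,x_n)\in[H_n]^n_\sqsubset$; transitivity of $\sqsubset$ gives $x_i\sqsubset x_n$ for each $i<n$, so $(x_1,\ldots,x_{n-1})\in[P_{\kappa_{x_n}}x_n]^{n-1}_\sqsubset$, and since $x_n\in X\subseteq X'_{x_n}\subseteq X_{(x_1,\ldots,x_{n-1})}$, we conclude $g(x_1,\ldots,x_n)=c_{(x_1,\ldots,x_{n-1})}=c$.

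The main obstacle is the bookkeeping in the inductive step: one must define the auxiliary sets $X'_y$ over a fixed $y$ (rather than over tuples) so that strong normality can be invoked as a diagonal intersection indexed by $P_\kappa\lambda$, and one must verify that the hypothesis $x_i\sqsubset x_n$ for all $i<n$ puts $(x_1,\ldots,x_{n-1})$ inside the range over which $X'_{x_n}$ was defined. Everything else is a routine interplay between $\kappa$-completeness (used finitely often per tuple, and then once at the end over $\omega$) and strong normality (used for the base case and for each diagonal intersection).
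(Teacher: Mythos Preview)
Your overall strategy---reducing to each $n$ by $\kappa$-completeness and proving the $n$-case by induction using strong normality---is the same framework the paper uses. The inductive step differs, however: you freeze the \emph{last} coordinate (exploiting that the range $P_{\kappa_{x_1}}x_1$ has size ${<}\kappa$ and invoking $\kappa$-completeness), whereas the paper freezes the \emph{first} coordinate, applying the inductive hypothesis directly to $F_x(x_1,\ldots,x_n)=F(x,x_1,\ldots,x_n)$, then taking a $\sqsubset$-diagonal intersection of the resulting sets $H_x$ and using strong normality on the regressive map $x\mapsto i_x$.

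There is a genuine gap in your verification step. You assert ``$x_n\in X\subseteq X'_{x_n}$'', but this does not follow: membership in $X=\bigtriangleup_\sqsubset\{X'_y\}$ gives $x_n\in X'_y$ only for $y\sqsubset x_n$, not for $y=x_n$. Trying $y=x_{n-1}$ instead does not help as stated, because $(x_1,\ldots,x_{n-1})\notin[P_{\kappa_{x_{n-1}}}x_{n-1}]^{n-1}_\sqsubset$ (the final entry fails $x_{n-1}\sqsubset x_{n-1}$). The repair is easy: redefine $X'_y$ as the intersection of the $X_{\vec x}$ over all $(n-1)$-tuples $\vec x$ whose last entry is $y$ and whose earlier entries lie in $P_{\kappa_y}y$; this is still an intersection of fewer than $\kappa$ sets, and now taking $y=x_{n-1}$ in the diagonal intersection yields $x_n\in X_{(x_1,\ldots,x_{n-1})}$ as needed. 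The paper's ``freeze the first coordinate'' approach avoids this bookkeeping issue entirely, since there every later coordinate $x_i$ already satisfies $x\sqsubset x_i$, which is precisely what the diagonal intersection supplies.
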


\begin{proof}
It suffices to show that for each $n\in\omega\setminus\{0\}$ there is an $H_n\in U$ such that $f\restrict[H_n]^n_\strsub$ is constant, because then $H=\bigcap_{n<\omega}H_n\in U$ will be the desired homogeneous set for $f$.

We will prove by induction on $n$ that for every $\strsub$-regressive $F:[P_\kappa\lambda]^{n}_\strsub\to P_\kappa\lambda$ there is $H\in U$ such that $F\restrict[H]^n_{\strsub}$ is constant. This holds for $n=1$ by the strong normality of $U$. Suppose its true for $n$. Let $F:[P_\kappa\lambda]^{n+1}_\strsub\to P_\kappa\lambda$ be $\strsub$-regressive. For each $x\in P_\kappa\lambda$ define
$F_x:[P_\kappa\lambda]^n_{\strsub}\to P_\kappa\lambda$ by
\begin{align*}
    F_x(x_1,\ldots,x_n)=\begin{cases}
        F(x,x_1,\ldots,x_n) & \text{if $x\strsub x_1$}\\
        0 & \text{o.w.}
    \end{cases}
\end{align*}
Let $C_x=\{y\in P_\kappa\lambda\st x\strsub y\}$. Then $C_x$ is club and for all $(x_1,\ldots,x_n)\in[C_x]^n$ we have $F_x(x_1,\ldots,x_n)=F(x,x_1,\ldots,x_n)$. By our inductive hypothesis, it follows that for each $x\in P_\kappa\lambda$ there is $H_x'\in U$ such that $F_x\restrict[H_x']^n_\strsub$ is constant. Furthermore, $H_x=H_x'\cap C_x\in U$, $F_x\restrict[H_x]^n_\strsub$ is constant. For $(x_1,\ldots,x_n)\in [H_x]^n_\strsub$ we let $i_x=F_x(x_1,\ldots,x_n)=F(x,x_1,\ldots,x_n)$ denote this constant value and note that $i_x\strsub x$. Now let 
\begin{align*}
    H'&=\bigtriangleup_\strsub\{H_x\st x\in P_\kappa\lambda\}\\
        &=\{y\in P_\kappa\lambda\st y\in\bigcap_{z\strsub y}H_z\}.
\end{align*}
We have $H'\in U$ by the strong normality of $U$. If $(x,x_1,\ldots,x_n)\in[H']^{n+1}_\strsub$ then $(x_1,\ldots,x_n)\in[H_x]^n$ and thus $F(x,x_1,\ldots,x_n)=F_x(x_1,\ldots,x_n)=i_x$. Since $x\mapsto i_x$ is regressive on $H'\in U$ there is $H\subseteq H'$ in $U$ such that for all $x\in H$ we have $i_x=i$, where $i$ is some fixed element of $P_\kappa\lambda$. Thus, if $(x_0,\ldots,x_n)\in [H]^{n+1}_\strsub$ then $F(x_0,\ldots,x_n)=i_{x_0}=i$.
\end{proof}

The previous result motivates the following definitions, which resemble characterizations of the one-cardinal Ramsey operator studied in \cite{MR0540770, MR4206111, MR4594301, MR1077260, MR2817562}.

\begin{definition}\label{definition_two_cardinal_ramsey_operator}
Suppose $\kappa$ is regular, $A$ is a set of ordinals with $\kappa\leq|A|$ and $I\supseteq I_{\kappa,A}$ is an ideal on $P_\kappa A$. We define ideals $\Ram(I)$ and $\Ram_\strsub(I)$ on $P_\kappa A$ as follows.
\begin{enumerate}
\item $\Ram(I)$ is the ideal on $P_\kappa A$ such that $S\in\Ram(I)^+$ if and only if every function $f:[S]^{<\omega}_\strsub\to 2$ has a homogeneous set $H\subseteq S$ in $I^+$, meaning that $f\restrict[H]^n$ is constant for all $n<\omega$.
\item $\Ram_\strsub(I)$ is the ideal on $P_\kappa A$ such that $S\in \Ram_\strsub(I)^+$ if and only if every $\strsub$-regressive function $f:[S]^{<\omega}_\strsub\to P_\kappa A$ has a homogeneous set $H\subseteq S$ in $I^+$, meaning that $f\restrict[H]^n$ is constant for all $n<\omega$.
\end{enumerate}
\end{definition}

It is easy to verify that $\Ram(I)\subseteq\Ram_\strsub(I)$, but unfortunately not much else is known about $\Ram(I)$. For example, generalizing from the one-cardinal case, one would like to show that Definition \ref{definition_two_cardinal_ramsey_operator}(1) and Definition \ref{definition_two_cardinal_ramsey_operator}(2) are equivalent when $I\supseteq \NS_{\kappa,A}$ (see \cite{MR4206111}); however, it remains open whether this can be done. In what follows we will focus on $\Ram_\strsub(I)$ rather than $\Ram(I)$.

Let us prove that $\Ram_\strsub(I)$ is a strongly normal ideal on $P_\kappa A$ whenever $I$ is an ideal on $P_\kappa A$.

\begin{theorem}
Suppose $\kappa$ is regular, $A$ is a set of ordinals with $\kappa\leq |A|$ and $I$ is an ideal on $P_\kappa A$. Then $\Ram_\strsub(I)$ is a strongly normal ideal on $P_\kappa A$.
\end{theorem}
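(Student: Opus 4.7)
The plan is to verify the two pieces of the statement in turn: first that $\Ram_\sqsubset(I)$ really is an ideal (closure under subsets and under finite unions), and then that it is strongly normal. Closure under subsets is immediate from the definition, since any $\sqsubset$-regressive function on a subset is also one on the larger set. For the remaining properties the underlying combinatorial trick is the same: given sets in $\Ram_\sqsubset(I)$ with chosen witness functions, combine those witnesses into a single $\sqsubset$-regressive function $F$ on the candidate set, arranged so that any $I^+$-homogeneous set for $F$ is forced to be an $I^+$-homogeneous set for one of the individual witnesses, giving a contradiction. The hypothesis $I \supseteq I_{\kappa,A}$ is used throughout only to discard the $I$-null set on which the natural $\sqsubset$-regressive encodings degenerate.

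I would treat strong normality as the main case, in its $\sqsubset$-diagonal-union form. Fix a family $\la T_x : x \in P_\kappa A\ra$ with each $T_x \in \Ram_\sqsubset(I)$, and set $S = \{y \in P_\kappa A : \exists\, x \sqsubset y,\ y \in T_x\}$; the goal is $S \in \Ram_\sqsubset(I)$. For each $y \in S$ pick $x_y \sqsubset y$ with $y \in T_{x_y}$, and for each $x \in P_\kappa A$ fix a $\sqsubset$-regressive witness $g_x : [T_x]^{<\omega}_\sqsubset \to P_\kappa A$ with no homogeneous set in $I^+$. Define $F : [S]^{<\omega}_\sqsubset \to P_\kappa A$ by $F(y_1) = x_{y_1}$ and, for $n \geq 2$, by setting $F(y_1,\ldots,y_n) = g_{x_{y_1}}(y_1,\ldots,y_n)$ when every $y_j$ lies in $T_{x_{y_1}}$, and $F(y_1,\ldots,y_n) = \varnothing$ otherwise. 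This $F$ is $\sqsubset$-regressive: $x_{y_1}\sqsubset y_1$ by construction, values of $g_{x_{y_1}}$ are $\sqsubset y_1$ by assumption, and $\varnothing \sqsubset y_1$ whenever $y_1 \cap \kappa \neq \varnothing$, which holds off an $I_{\kappa,A}$-null (hence $I$-null) set. Now suppose for contradiction that $H \in P(S) \cap I^+$ is homogeneous for $F$. Homogeneity at $n = 1$ forces $x_y \equiv \bar x$ on $H$, whence $H \subseteq T_{\bar x}$; for $n \geq 2$ every tuple from $[H]^n_\sqsubset$ then satisfies the first clause of $F$, so $F\restrict [H]^n_\sqsubset = g_{\bar x}\restrict[H]^n_\sqsubset$, constant by homogeneity. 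Thus $H \in I^+$ is homogeneous for $g_{\bar x}$, contradicting its choice.

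Closure under finite unions follows from a simpler variant of the same recipe: given $S_0, S_1 \in \Ram_\sqsubset(I)$ with witnesses $f_0, f_1$, one builds $F$ on $[S_0\cup S_1]^{<\omega}_\sqsubset$ whose value on singletons encodes which of $S_0, S_1$ the point belongs to — using, say, $\varnothing$ versus $\{\min(y \cap \kappa)\}$, both of which are $\sqsubset y$ off the $I$-null set where $|y \cap \kappa| < 2$ — and whose higher-dimensional values copy $f_{c(y_1)}$ whenever all $y_j$ lie in $S_{c(y_1)}$. Any $I^+$-homogeneous set for such $F$ then sits in one $S_i$ and is homogeneous for $f_i$, a contradiction. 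The principal obstacle I anticipate is purely book-keeping: ensuring $\sqsubset$-regressiveness of the combined function on all of $[S]^{<\omega}_\sqsubset$ despite the presence of default values and partial-domain definitions. This is exactly the point at which the codomain being $P_\kappa A$ (rather than $2$ or $\kappa$) matters — it has room to record the full parameter $x_y$ in $F(y_1)$ — and at which the assumption $I \supseteq I_{\kappa,A}$ is unavoidable, in order to absorb the degenerate $y$ with trivial $y \cap \kappa$.
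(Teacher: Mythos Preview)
Your argument has a gap at the $n=1$ level. You set $F(\{y\}) = x_{y}$, so homogeneity of $H$ for $F$ at $n=1$ yields only that $x_y \equiv \bar x$ is constant on $H$, and hence $H \subseteq T_{\bar x}$. For $n \geq 2$ you correctly obtain $F \restrict [H]^n_\sqsubset = g_{\bar x} \restrict [H]^n_\sqsubset$, constant by homogeneity of $H$ for $F$. But homogeneity of $H$ for $g_{\bar x}$ also requires $g_{\bar x} \restrict [H]^1$ to be constant, and nothing in your construction forces this: $F(\{y\})$ carries no information about $g_{\bar x}(\{y\})$. So $H$ need not be homogeneous for $g_{\bar x}$, and the contradiction does not follow. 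The same defect is present in your finite-union sketch, where $F(\{y\})$ records only which piece $S_i$ contains $y$, not $f_i(\{y\})$.

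The paper's proof repairs exactly this point by fixing a bijection $\pi : P_\kappa A \times P_\kappa A \to P_\kappa A$ and setting the singleton value of the combined function to $\pi(h(x), f_{h(x)}(\{x\}))$, thereby packing both the fiber label \emph{and} the level-$1$ witness value into a single element of $P_\kappa A$. One then restricts to the weak club $D = \{x : \pi[P_{\kappa_x}x \times P_{\kappa_x}x] \subseteq P_{\kappa_x}x\}$ to keep this $\sqsubset$-regressive, which is harmless since (as shown just before this theorem) $\Ram_\sqsubset(I)^+$ is closed under intersection with weak clubs. Apart from this missing ingredient your approach is essentially the paper's --- the diagonal-union and regressive-function formulations of strong normality being interchangeable --- so inserting the pairing trick at $n=1$ makes your argument go through.
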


\begin{proof}
We follow \cite[Theorem 2.1]{MR1077260}. Without loss of generality, suppose $P_\kappa A\notin\Ram_\sqsubset(I)$, so that $\Ram_\sqsubset(I)$ is nontrivial. Suppose $X\in\Ram_\sqsubset(I)^+$ and $h:X\to P_\kappa A$ is ${\sqsubset}$-regressive. For the sake of contradiction, suppose that for all $y\in P_\kappa A$ we have $h^{-1}(\{y\})\in \Ram_\sqsubset(I)$. For each $y\in P_\kappa A$, fix a ${\sqsubset}$-regressive $f_y:[h^{-1}(\{y\})]^{<\omega}_\sqsubset\to P_\kappa A$ and a weak club $C_y$ which witness $h^{-1}(\{y\})\in\Ram_\sqsubset(I)$. Let $\pi:P_\kappa A\times P_\kappa A\to P_\kappa A$ be a bijection and notice that $D=\{x\in P_\kappa A\st \pi"P_{\kappa_x}x\times P_{\kappa_x}x\subseteq P_{\kappa_x}x\}$ is a weak club. Then $C=\bigtriangleup_\sqsubset\{C_y\st y\in P_\kappa A\}\cap D$ is a weak club in $P_\kappa A$, and hence $X\cap C\in\Ram_\sqsubset(I)^+$. Define a ${\sqsubset}$-regressive function $f:[X\cap C]^{<\omega}_\sqsubset\to P_\kappa A$ by letting
\begin{align*}
    f(\{x\})&=\pi(h(x),f_{h(x)}(\{x\}))\\
    f(x_1,\ldots,x_n)&=\begin{cases}
        f_{h(x_1)}(x_1,\ldots,x_n) & \text{if $h(x_1)=\cdots=h(x_n)$}\\
        0 & \text{otherwise}
    \end{cases}
\end{align*}
Since $X\in \Ram_\sqsubset(I)^+$, there is an $H\in P(X\cap C)\cap I^+$ which is homogeneous for $f$. Let $z\in P_\kappa A$ be such that $f(\{x\})=z$ for all $x\in H$. Thus, there is some $y\in P_\kappa A$ such that $h(x)=y\sqsubset x$ for all $x\in H$, and by definition of diagonal intersection, we have $H\cap \{a\in P_\kappa A\st y\sqsubset a\}\subseteq C_y\cap h^{-1}(\{y\})$. By definition of $f_y$, it follows that $H$ is not homogeneous for $f_y$. Since $H$ is homogeneous for $f$ but not homogeneous for $f_y$, and since $f\restrict([H]^{<\omega}\setminus [H]^1)=f_y\restrict([H]^{<\omega}\setminus [H]^1)$, it follows that there are $x_1,x_2\in H$ such that $f_y(\{x_1\})\neq f_y(\{x_2\})$, but this is not possible because it implies $\pi(y,f_y(\{x_1\}))\neq\pi(y,f_y(\{x_2\}))$ and hence $f(\{x_1\})\neq f(\{x_2\})$, a contradiction.
\end{proof}

Since the non-strongly stationary ideal $\NSS_{\kappa,A}$ is the minimal strongly normal ideal on $P_\kappa A$ \cite{MR1074449}, we easily obtain the following corollary.

\begin{corollary}\label{proposition_weak_club_for_free}
Suppose $\kappa$ is regular, $A$ is a set of ordinals with $\kappa\leq |A|$ and $I\supseteq I_{\kappa, A}$ is an ideal on $P_\kappa A$. Then $S\in \Ram_\strsub(I)^+$ if and only if $S\cap C\in \Ram_\strsub(A)^+$ for all weak clubs $C$ in $P_\kappa A$.
\end{corollary}

Feng \cite[Theorem 2.3]{MR1077260} gave a characterization of the one-cardinal Ramsey operator in terms of \emph{$(\omega,S)$}-sequences. We would like to generalize this characterization to the two-cardinal operator $\Ram_\strsub$. Given $S\subseteq P_\kappa A$, an \emph{$(\omega,S,\strsub)$-list} is a function $\vec{S}:[S]^{<\omega}_\strsub\to P(P_\kappa A)$ such that $\vec{S}(x_1,\ldots,x_n)\subseteq P_{\kappa_{x_1}}x_1$ for all $(x_1,\ldots,x_n)\in[S]^{<\omega}_\strsub$. We say that a set $H\subseteq S$ is \emph{homogeneous} for $\vec{S}$ if for all $n<\omega$ and all $(x_1,\ldots,x_n)$ and $(y_1,\ldots,y_n)$ in $[H]^n_{\strsub}$ with $x_1\strsubeq y_1$ we have $S(y_1,\ldots,y_n)\cap P_{\kappa_{x_1}}x_1=S(x_1,\ldots,x_n)$.


\begin{proposition}
Suppose $\kappa$ is a cardinal, $A$ is a set of ordinals with $\kappa\leq|A|$, $I\supseteq I_{\kappa, A}$ is an ideal on $P_\kappa A$ and $S\subseteq P_\kappa A$. The following are equivalent.
\begin{enumerate}
    \item $S\in \Ram_\strsub(I)^+$
    \item For every $(\omega,S,\strsub)$-list $\vec{S}:[S]^{<\omega}_\strsub\to P(P_\kappa A)$ there is an $H\in P(S)\cap I^+$ which is homogeneous for $\vec{S}$.
    \item For any $(\omega,S,\strsub)$-list $\vec{S}:[S]^{<\omega}_\strsub\to P(P_\kappa A)$ there is an $H\in P(S)\cap I^+$ and a sequence $\<S_n\st 1<n<\omega\>$ of subsets of $P_\kappa A$ such that for all $n$, for all $(x_1,\ldots,x_n)\in[H]^n_\strsub$, we have $\vec{S}(x_1,\ldots,x_n)=S_n\cap P_{\kappa_{x_1}}x_1$.
\end{enumerate}
\end{proposition}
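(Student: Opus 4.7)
The plan is to establish (2)$\Leftrightarrow$(3) (essentially bookkeeping) and (1)$\Leftrightarrow$(2), with (1)$\Rightarrow$(2) being the main obstacle.

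For (3)$\Rightarrow$(2), since $x_1 \sqsubseteq y_1$ implies $P_{\kappa_{x_1}}x_1 \subseteq P_{\kappa_{y_1}}y_1$, for $(x_1,\ldots,x_n),(y_1,\ldots,y_n)\in[H]^n_\sqsubset$ with $x_1\sqsubseteq y_1$ one has
\[ \vec{S}(y_1,\ldots,y_n)\cap P_{\kappa_{x_1}}x_1 = S_n\cap P_{\kappa_{y_1}}y_1 \cap P_{\kappa_{x_1}}x_1 = S_n\cap P_{\kappa_{x_1}}x_1 = \vec{S}(x_1,\ldots,x_n). \]
For (2)$\Rightarrow$(3), I set $S_n := \bigcup\{\vec{S}(x_1,\ldots,x_n) : (x_1,\ldots,x_n)\in[H]^n_\sqsubset\}$. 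The containment $\vec{S}(x_1,\ldots,x_n) \subseteq S_n \cap P_{\kappa_{x_1}}x_1$ is immediate; for the reverse, given $a\in S_n\cap P_{\kappa_{x_1}}x_1$, pick a witness $(y_1,\ldots,y_n)\in[H]^n_\sqsubset$ with $a\in\vec{S}(y_1,\ldots,y_n)$ and apply the homogeneity condition in the three cases $x_1\sqsubseteq y_1$, $y_1\sqsubseteq x_1$, or $\sqsubset$-incomparable; the last case reduces to the comparable ones by choosing a common $\sqsubset$-upper bound $z_1\in H$, available because $H\in I^+\supseteq I_{\kappa,A}^+$ is $\sqsubset$-unbounded.

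For (2)$\Rightarrow$(1), given a $\sqsubset$-regressive $f:[S]^{<\omega}_\sqsubset\to P_\kappa A$, the singleton assignment $\vec{S}(x_1,\ldots,x_n) := \{f(x_1,\ldots,x_n)\}$ is a valid $(\omega,S,\sqsubset)$-list because $\sqsubset$-regressiveness gives $f(x_1,\ldots,x_n)\in P_{\kappa_{x_1}}x_1$. Applying (2) yields an $I^+$ homogeneous $H$; since the right-hand side of $\{f(y_1,\ldots,y_n)\}\cap P_{\kappa_{x_1}}x_1 = \{f(x_1,\ldots,x_n)\}$ is nonempty, so is the left-hand side, forcing $f(y_1,\ldots,y_n) = f(x_1,\ldots,x_n)$ whenever $x_1\sqsubseteq y_1$ in $H$. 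A common $\sqsubset$-upper bound handles the incomparable case, and $f$ is therefore constant on each $[H]^n_\sqsubset$.

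The main obstacle is (1)$\Rightarrow$(2). Given $\vec{S}$, my strategy is to construct a $\sqsubset$-regressive $F:[S]^{<\omega}_\sqsubset\to P_\kappa A$ whose Ramsey-homogeneity on some $H\in I^+$ forces the coherence condition on $\vec{S}$. The natural candidate, for $(x_0,x_1,\ldots,x_n)\in [S]^{n+1}_\sqsubset$, is to let $F(x_0,x_1,\ldots,x_n)$ encode the trace $\vec{S}(x_1,\ldots,x_n)\cap P_{\kappa_{x_0}}x_0$ as an element of $P_{\kappa_{x_0}}x_0$ via a fixed canonical coding. The difficulty is that this trace is a subset of $P_{\kappa_{x_0}}x_0$ and need not literally fit into $P_{\kappa_{x_0}}x_0$; the standard workaround is to restrict to a weak club of $x_0$ (cf.\ Proposition~\ref{proposition_weak_club_for_free}) on which a canonical bijection between $P_{\kappa_{x_0}}x_0$ and an appropriate subset of $x_0$ is available, so that the coded $F$ is $\sqsubset$-regressive. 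Applying (1) then produces $H\in I^+$ on which the encoded traces are uniform across tuples in $[H]^{n+1}_\sqsubset$; decoding and repeating the case analysis from (2)$\Rightarrow$(3) upgrades this to the required coherence $\vec{S}(y_1,\ldots,y_n)\cap P_{\kappa_{x_1}}x_1 = \vec{S}(x_1,\ldots,x_n)$ on $H$. The crux of the whole argument is the coding step: keeping $F$ $\sqsubset$-regressive while retaining enough information to recover the full trace.
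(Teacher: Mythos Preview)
Your treatments of (2)$\Leftrightarrow$(3) and (2)$\Rightarrow$(1) are fine and match the paper's approach. The problem is (1)$\Rightarrow$(2): the coding you propose cannot be carried out, and this is a cardinality obstruction rather than a technicality. The trace $\vec{S}(x_1,\ldots,x_n)\cap P_{\kappa_{x_0}}x_0$ is an \emph{arbitrary} subset of $P_{\kappa_{x_0}}x_0$, and since $|x_0|\geq|x_0\cap\kappa|=\kappa_{x_0}$, such a subset may well have size $\geq\kappa_{x_0}$. A $\sqsubset$-regressive value at $(x_0,\ldots,x_n)$ must lie in $P_{\kappa_{x_0}}x_0$, i.e.\ be a subset of $x_0$ of size strictly less than $\kappa_{x_0}$; so there is simply no injection from $P(P_{\kappa_{x_0}}x_0)$ into $P_{\kappa_{x_0}}x_0$ to implement your ``canonical coding''. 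Restricting to a weak club where a bijection $P_{\kappa_{x_0}}x_0\to x_0$ exists does not help: the image of the trace is then a subset of $x_0$, still of size possibly $\geq\kappa_{x_0}$, hence still not an element of $P_{\kappa_{x_0}}x_0$.

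The paper sidesteps this by never attempting to encode the full trace. Instead, for a tuple of even length $2n$, split as $a=(x_1,\ldots,x_n)$ and $b=(x_{n+1},\ldots,x_{2n})$, and let $g(a{}^\smallfrown b)$ record only \emph{whether} $\vec{S}(a)=\vec{S}(b)\cap P_{\kappa_{x_1}}x_1$, and if not, a \emph{single} element $z$ of the symmetric difference (packaged as $z\cup\{\kappa_z\}$ to keep it distinguishable from the ``agree'' value $\emptyset$). This is always small enough to be $\sqsubset$-regressive. On a homogeneous $H$, a short argument with three pairwise $\sqsubset$-separated $n$-tuples $a,b,c$ shows the constant value cannot come from the disagreement case, so $\vec{S}(a)=\vec{S}(b)\cap P_{\kappa_{a_1}}a_1$ whenever $a_n\sqsubset b_1$; the general case $a_1\sqsubseteq b_1$ then follows by choosing $c$ above both, exactly as in your (2)$\Rightarrow$(3) argument. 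You should replace your coding-the-trace strategy with this record-a-single-witness idea.
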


\begin{proof}
It is easy to see that (2) and (3) are equivalent. Let us show that (1) and (2) are equivalent. 

For (1) implies (2), suppose $\vec{S}$ is an $(\omega,S,\strsub)$-list. Since 
\[C=\{x\in P_\kappa A\st \text{$x\cap\kappa$ is a limit ordinal}\}\]
is club in $P_\kappa A$, we can assume that $S\subseteq C$. Define $g:[S]^{<\omega}_\strsub\to P_\kappa A$ such that for all $(x_1,\ldots,x_{2n})\in [S]^{2n}_\strsub$, setting $a=(x_1,\ldots,x_n)$ and $b=(x_{n+1},\ldots,x_{2n})$ we have $g(x_1,\ldots,x_{2n})=\emptyset$ if $\vec{S}(a)=\vec{S}(b)\cap P_{\kappa_{x_1}}x_1$, and $g(x_1,\ldots,x_{2n})=z\cup\{\kappa_z\}$ where $z$ is some element of $\vec{S}(a)\bigtriangleup\vec{S}(b)\cap P_{\kappa_{x_1}}x_1$ if $\vec{S}(a)\neq\vec{S}(b)\cap P_{\kappa_{x_1}}x_1$.\footnote{We use $z\cup\{\kappa_z\}$ in the second case so that the value of $g(x_1,\ldots,x_{2n})$, where $(x_1,\ldots,x_{2n})\in[S]^{2n}_\strsub$, can be used to determine which case $(x_1,\ldots,x_{2n})$ fall into.} Notice that since $x_1\in S\subseteq C$, it follows that, in the second case above $z\cup\{\kappa_z\}\in P_{\kappa_{x_1}}x_1$, and thus $g$ is $\strsub$-regressive. Let $H\in P(S)\cap I^+$ be homogeneous for $g$.

Let us show that if $a,b\in[H]^n_\strsub$ are such that $a_n\strsub b_1$ then $\vec{S}(a)=\vec{S}(b)\cap P_{\kappa_{x_1}}x_1$. Suppose $\vec{S}(a)\neq\vec{S}(b)\cap P_{\kappa_{x_1}}x_1$. Then $g(a\concat b)=z\cup\{\kappa_z\}$ where $z\in\vec{S}(a)\bigtriangleup\vec{S}(b)\cap P_{\kappa_{x_1}}x_1$. Without loss of generality, say $z\in\vec{S}(a)\setminus\vec{S}(b)$. Let $c\in[H]^n_\strsub$ be such that $b_n\strsub c_1$. Then, by the homogeneity of $H$, we have $g(b\concat c)=z\cup\{\kappa_z\}$, and thus by definition of $g$ we have $z\in \vec{S}(b)\bigtriangleup\vec{S}(c)\cap P_{\kappa_{x_1}}x_1$. Since $z\notin\vec{S}(b)$ we have $z\in\vec{S}(a)\cap\vec{S}(c)$, which implies $z\notin\vec{S}(a)\bigtriangleup\vec{S}(c)\cap P_{\kappa_{x_1}}x_1$. However, by homogeneity of $H$, it follows that $g(a\concat c)=z\cup\{\kappa_z\}\neq \emptyset$, and thus $z\in\vec{S}(a)\bigtriangleup\vec{S}(b)\cap P_{\kappa_{x_1}}x_1$, a contradiction.

Now, let $a,b\in [H]^n_{\strsub}$ be such that $a_1\strsubeq b_1$. Choose $c\in[H]^n_\strsub$ with $a_n,b_n\strsub c_1$. Then $\vec{S}(a)=\vec{S}(c)\cap _{\kappa_{a_1}}a_1$ and $\vec{S}(b)=\vec{S}(c)\cap P_{\kappa_{b_1}}b_1$, which implies 
\begin{align*}
\vec{S}(b)\cap P_{\kappa_{a_1}}a_1&=(\vec{S}(c)\cap P_{\kappa_{b_1}}b_1)\cap P_{\kappa_{a_1}}a_1\\
	&=\vec{S}(c)\cap P_{\kappa_{a_1}}a_1\\
	&=\vec{S}(a).
\end{align*}

For (2) implies (1), let $f:[S]^{<\omega}_\strsub\to P_\kappa A$ be a ${\strsub}$-regressive function and let $C\subseteq P_\kappa A$ be a weak club. We define an $(\omega,S\cap C,\strsub)$-sequence $\vec{S}$ as follows. For each $a\in [S\cap C]^{<\omega}_\strsub$ let $\vec{S}(a)=\{f(a)\}\subseteq P_{\kappa_{a_1}}a_1$. Let $H\in P(S\cap C)\cap I^+$ be homogeneous for $\vec{S}$. Then $H$ is also homogeneous for $f$.
\end{proof}


Next we demonstrate that the nontriviality of the ideal $\Ram_\strsub(I)$ naturally leads to the existence of nonlinear sets of indiscernibles for certain structures in countable languages.

\begin{definition}
If $\mathcal{M}$ is a structure in a countable language and $P_\kappa A\subseteq \mathcal{M}$ we say that $H\subseteq P_\kappa A$ is a \emph{set of indiscernibles for $\mathcal{M}$} if for every $n<\omega$ and for all $(x_1,\ldots, x_n),(y_1,\ldots,y_n)\in[H]^n_{\strsub}$ we have
\[\mathcal{M}\models\varphi(x_1,\ldots,x_n)\text{ if and only if }\mathcal{M}\models\varphi(y_1,\ldots,y_n)\]
for all first-order $\varphi$ in the language of $\mathcal{M}$ with exactly $n$ free variables.
\end{definition}

\begin{proposition}\label{proposition_indiscernibles_from_2_coloring}
Suppose $\kappa$ is a cardinal, $A$ is a set of ordinals with $\kappa\leq|A|$ and $I\supseteq I_{\kappa,A}$ is an ideal on $P_\kappa A$. If every function $f:[P_\kappa A]^{<\omega}_\strsub\to 2$ has a homogeneous set in $I^+$ then every structure $\mathcal{M}$ in a countable language with $P_\kappa A\subseteq \mathcal{M}$ has a set of indiscernibles $H\in I^+$.
\end{proposition}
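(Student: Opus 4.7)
The plan is to emulate the standard proof that Ramsey cardinals carry indiscernibles for structures in countable languages: package the truth values of all countably many formulas into a single $2$-coloring of $[P_\kappa A]^{<\omega}_\sqsubset$ and apply the hypothesis exactly once.

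First, I would fix an enumeration $\<\varphi_k : k < \omega\>$ of all first-order formulas in the language of $\mathcal{M}$, with $n_k$ denoting the number of free variables of $\varphi_k$. For each $k$ let $f_k:[P_\kappa A]^{n_k}_\sqsubset\to 2$ be the coloring defined by $f_k(x_1,\ldots,x_{n_k}) = 1$ if and only if $\mathcal{M} \models \varphi_k(x_1,\ldots,x_{n_k})$. To combine the $f_k$'s into a single coloring, choose a function $m \mapsto (k(m), j(m))$ from $\omega$ to $\omega \times \omega$ such that $m \geq n_{k(m)}$ for all $m$ and, for each fixed $k$, the set $\{m<\omega : k(m) = k\}$ is cofinal in $\omega$. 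Define $g:[P_\kappa A]^{<\omega}_\sqsubset \to 2$ by $g(x_1,\ldots,x_m) = f_{k(m)}(x_1,\ldots,x_{n_{k(m)}})$. By hypothesis there is a set $H \in I^+$ that is homogeneous for $g$.

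To verify that $H$ is a set of indiscernibles for $\mathcal{M}$, fix a formula $\varphi_k$ and two tuples $\bar{x},\bar{y} \in [H]^{n_k}_\sqsubset$. Since $H \in I^+ \subseteq I_{\kappa,A}^+$, the set $H$ is unbounded in $P_\kappa A$, so I can choose an $m$ with $k(m) = k$ and $m$ large enough that tuples $\bar z, \bar w$ of length $m - n_k$ can be found inside $H$ with $x_{n_k} \sqsubset z_1 \sqsubset \cdots \sqsubset z_{m-n_k}$ and $y_{n_k} \sqsubset w_1 \sqsubset \cdots \sqsubset w_{m-n_k}$. Homogeneity of $g$ at length $m$ then gives $f_k(\bar{x}) = g(\bar{x} \concat \bar{z}) = g(\bar{y} \concat \bar{w}) = f_k(\bar{y})$, which is precisely the required indiscernibility of $H$ for $\varphi_k$.

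The main conceptual obstacle is the encoding step: a single $2$-coloring records only one bit per tuple, while we must simultaneously control the truth values of countably many formulas. One must therefore distribute the $f_k$'s across the tuple-lengths and reserve cofinally many $m$'s for each $k$ so that the $\sqsubset$-extensions inside $H$ needed to compare two tuples of the same length are always available. A small technical point is that extending a finite $\sqsubset$-chain inside $H$ requires slightly more than $\subseteq$-unboundedness; in the inaccessible setting typical for this paper the two notions coincide, as noted in the discussion preceding the definition of weak club, so this causes no genuine difficulty.
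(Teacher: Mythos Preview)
Your proof is correct and takes a genuinely different route from the paper's. The paper first establishes an auxiliary step: the $2$-coloring hypothesis already implies that every $\gamma$-coloring $f:[P_\kappa A]^{<\omega}_\sqsubset\to\gamma$ with $\gamma<\kappa$ has a homogeneous set in $I^+$ (by an argument analogous to \cite[Proposition 7.14(c)]{MR1994835}). With that in hand, it defines a single coloring $f(x_1,\ldots,x_n)=\{\varphi\in\Phi_n : \mathcal{M}\models\varphi(x_1,\ldots,x_n)\}$, where $\Phi_n$ is the set of formulas with exactly $n$ free variables; since $|P(\Phi_n)|<\kappa$, a homogeneous set for $f$ exists in $I^+$ and is immediately a set of indiscernibles. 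Your approach bypasses the auxiliary $\gamma$-coloring lemma entirely by distributing the countably many formulas across the tuple-lengths and then using the $\sqsubset$-unboundedness of $H$ to pad tuples out to a length at which the relevant formula is recorded. The paper's argument is shorter and delivers the full $n$-type at once, at the cost of invoking the lifting lemma and the inequality $2^\omega<\kappa$; yours is more self-contained and stays at the level of $2$-colorings throughout. One small remark: your ``$m$ large enough'' clause is harmless but unnecessary---any single $m$ with $k(m)=k$ already works once you know $H$ is $\sqsubset$-unbounded, since the required extensions $\bar z,\bar w$ can be chosen one element at a time.
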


\begin{proof}
Using an argument similar to that of \cite[Proposition 7.14(c)]{MR1994835}, it is easy to see that our assumption implies that for every $\gamma<\kappa$ every function $f:[P_\kappa A]^{<\omega}_\strsub\to\gamma$ has a homogeneous set $H\in I^+$. Let $\mathcal{M}$ be a structure in a countable language with $P_\kappa A\subseteq\mathcal{M}$ and let $\Phi_n$ denote the collection of all first order formulas in the language of $\mathcal{M}$ with exactly $n$ free variables. Define a function $f$ with domain $[P_\kappa A]^{<\omega}_\strsub$ by letting
\[f(x_1,\ldots,x_n)=\{\varphi\in \Phi_n\st \mathcal{M}\models\varphi(x_1,\ldots,x_n)\}.\]
Since $|P(\Phi_n)|<\kappa$, $f$ has a homogeneous set $H\in I^+$. It is easy to verify that $H$ is a set of indiscernibles for $M$.
\end{proof}

\section{Generalizing results of Baumgartner and Feng}

The ineffability hierarchy and the Ramsey hierarchy, which were introduced by Baumgartner \cite{MR0540770} and Feng \cite{MR1077260} respectively, can be obtained by iterating the associated ideal operators. In the present section we consider two-cardinal versions of these hierarchies and investigate their relationship with a notion of transfinite two-cardinal indescribability which generalizes previously studied notions \cite{MR2817562, MR3894041, MR4082998}.

\subsection{Transfinite two-cardinal indescribability}

Let us now generalize a notion of transfinite indescribability introduced in \cite{MR3894041}, and further utilized in \cite{MR4094556, MR4693983}\footnote{Let us note that another notion of transfinite indescribability defined in terms of games was introduced by Welch and Sharpe \cite{MR2817562}.} to the two-cardinal context.

For the reader's convenience, let us discuss the notion of $\Pi^1_\xi$ formula introduced in \cite{MR3894041}. Recall that a formula of second-order logic is $\Pi^1_0$, or equivalently $\Sigma^1_0$, if it does not have any second-order quantifiers, but it may have finitely-many first-order quantifiers and finitely-many first and second-order free variables. Bagaria inductively defined the notion of $\Pi^1_\xi$ formula for any ordinal $\xi$ as follows. A formula is $\Sigma^1_{\xi+1}$ if it is of the form
\[\exists X_0\cdots\exists X_k\varphi(X_0,\ldots,X_k)\]
where $\varphi$ is $\Pi^1_\xi$, and a formula is $\Pi^1_{\xi+1}$ if it is of the form 
\[\forall X_0\cdots\forall X_k\varphi(X_0,\ldots, X_k)\]
where $\varphi$ is $\Sigma^1_\xi$. If $\xi$ is a limit ordinal, we say that a formula is $\Pi^1_\xi$ if it is of the form 
\[\bigwedge_{\zeta<\xi}\varphi_\zeta\]
where $\varphi_\zeta$ is $\Pi^1_\zeta$ for all $\zeta<\xi$ and the infinite conjunction has only finitely-many free second-order variables. We say that a formula is $\Sigma^1_\xi$ if it is of the form
\[\bigvee_{\zeta<\xi}\varphi_\zeta\]
where $\varphi_\zeta$ is $\Sigma^1_\zeta$ for all $\zeta<\xi$ and the infinite disjunction has only finitely-many free second-order variables.

Suppose $\kappa$ is a cardinal and $A$ is a set of ordinals with $\kappa\leq|A|$. We define a two-cardinal version of the cumulative hierarchy up to $\kappa$ as follows:
\begin{align*}
V_0(\kappa,A)&=A,\\
V_{\alpha+1}(\kappa,A)&=P_\kappa(V_\alpha(\kappa,A))\cup V_\alpha(\kappa,A)\text{ and}\\
V_\alpha(\kappa,A)&=\bigcup_{\beta<\alpha} V_\beta(\kappa,A)\text{ for $\alpha$ a limit.}
\end{align*}

Clearly $V_\kappa\subseteq V_\kappa(\kappa,A)$ and if $A$ is transitive then so is $V_\alpha(\kappa,A)$ for all $\alpha\leq\kappa$. See \cite[Section 4]{MR808767} for a discussion of the restricted axioms of $\ZFC$ satisfied by $V_\kappa(\kappa,\lambda)$ when $\kappa$ is inaccessible.

\begin{definition}\label{definition_indescribability}
Suppose $\kappa$ is a regular cardinal and $A$ is a set of ordinals with $\kappa\leq|A|$. We say that a set $S\subseteq P_\kappa A$ is $\Pi^1_\xi$-indescribable in $P_\kappa A$ if whenever $(V_\kappa(\kappa,A),\in,R)\models\varphi$ where $k<\omega$, $R\subseteq V_\kappa(\kappa,A)$ and $\varphi$ is a $\Pi^1_\xi$ sentence, there is an $x\in S$ such that
\[x\cap\kappa=|x\cap\kappa|\text{ and }(V_{\kappa_x}(\kappa_x,x),\in,R\cap V_{\kappa_x}(\kappa_x,x))\models\varphi.\]
We define the $\Pi^1_\xi$-indescribability ideal on $P_\kappa A$ to be the collection \[\Pi^1_\xi(\kappa,A)=\{X\subseteq P_\kappa A\st \text{$X$ is not $\Pi^1_\xi$-indescribable in $P_\kappa A$}\}.\] 
\end{definition}

Let us note that the first author proved \cite{MR4082998} that $\Pi^1_0(\kappa,A)=\NSS_{\kappa,A}$. For notational convenience we let $\Pi^1_{-1}(\kappa,A)=\I_{\kappa,A}$.

Abe proved \cite[Lemma 4.1]{MR1635559} that $\Pi^1_n(\kappa,A)$ is a strongly normal ideal on $P_\kappa A$ for $n<\omega$ (see \cite{MR4082998} for some additional characterizations of $\Pi^1_n(\kappa,A)$). A straightforward generalization of the argument for \cite[Proposition 4.4]{MR3894041} establishes the following.

\begin{proposition}
Suppose $\kappa$ is a regular cardinal and $A$ is a set of ordinals with $\kappa\leq|A|$. Then $\Pi^1_\xi(\kappa,A)$ is a strongly normal ideal on $P_\kappa A$.
\end{proposition}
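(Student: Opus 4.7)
I plan to adapt the argument for the one-cardinal case from \cite[Proposition 4.4]{MR3894041}, with the standard cumulative hierarchy replaced by $V_\alpha(\kappa,A)$ and ordinal regressivity replaced by $\sqsubset$-regressivity on $P_\kappa A$. Closure under subsets is immediate from Definition \ref{definition_indescribability}, and nontriviality is the hypothesis $P_\kappa A \notin \Pi^1_\xi(\kappa, A)$. So the main task is strong normality, from which closure under $<\kappa$-unions (and hence the ideal property) follows together with the containment $I_{\kappa,A}\subseteq \Pi^1_\xi(\kappa,A)$.

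For strong normality, let $S\in\Pi^1_\xi(\kappa,A)^+$ and let $f:S\to P_\kappa A$ be $\sqsubset$-regressive; I assume toward contradiction that $f^{-1}(\{y\})\in\Pi^1_\xi(\kappa,A)$ for every $y\in P_\kappa A$. For each such $y$, I fix a witness $(R_y,\varphi_y)$, consisting of a predicate $R_y\subseteq V_\kappa(\kappa,A)$ and a $\Pi^1_\xi$ sentence $\varphi_y$ with $(V_\kappa(\kappa,A),\in,R_y)\models\varphi_y$, such that no $x\in f^{-1}(\{y\})$ with $x\cap\kappa = |x\cap\kappa|$ satisfies $(V_{\kappa_x}(\kappa_x,x),\in,R_y\cap V_{\kappa_x}(\kappa_x,x))\models\varphi_y$. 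Using a universal $\Pi^1_\xi$ formula $U(e,X)$ as constructed in \cite[Lemma 3.5]{MR3894041}, I rewrite each $\varphi_y$ as $U(e_y,R_y)$ for a suitable code $e_y$, and then use a first-order definable pairing on $V_\kappa(\kappa,A)$ to encode the whole family $\{(e_y,R_y):y\in P_\kappa A\}$ into a single predicate $R\subseteq V_\kappa(\kappa,A)$; this encoding is arranged so that, for every $x$ with $\kappa_x$ inaccessible and $x\cap\kappa = \kappa_x$, the restriction $R\cap V_{\kappa_x}(\kappa_x,x)$ decodes coherently to the family $\{(e_y,R_y\cap V_{\kappa_x}(\kappa_x,x)):y\in P_{\kappa_x}x\}$.

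I then consider the sentence
\[
\psi \;\equiv\; \forall y\,\bigl(y\in P_\kappa A \to U(e_y,R_y)\bigr),
\]
where $e_y$ and $R_y$ are extracted from $R$ by the definable decoding. This $\psi$ is $\Pi^1_\xi$, since prepending first-order quantifiers to a $\Pi^1_\xi$ matrix does not raise the complexity and since $P_\kappa A\subseteq V_1(\kappa,A)$ is first-order definable in $V_\kappa(\kappa,A)$. By construction $(V_\kappa(\kappa,A),\in,R)\models\psi$, so $\Pi^1_\xi$-indescribability of $S$ produces some $x\in S$ with $x\cap\kappa = |x\cap\kappa|$ reflecting $\psi$ to $V_{\kappa_x}(\kappa_x,x)$. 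Setting $y := f(x)$, the $\sqsubset$-regressivity of $f$ gives $y\in P_{\kappa_x}x$, so instantiating the reflected $\psi$ at this $y$ yields $(V_{\kappa_x}(\kappa_x,x),\in,R_y\cap V_{\kappa_x}(\kappa_x,x))\models\varphi_y$; since $x\in f^{-1}(\{y\})$, this contradicts the choice of $(R_y,\varphi_y)$.

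The main obstacle will be the technical bookkeeping of the encoding: one must verify that the universal $\Pi^1_\xi$ formula $U$ and the pairing function on $V_\kappa(\kappa,A)$ interact correctly with restriction to $V_{\kappa_x}(\kappa_x,x)$, so that the reflected instance $U(e_y,R_y\cap V_{\kappa_x}(\kappa_x,x))$ really does evaluate to $\varphi_y$ in the smaller structure. This parallels the ordinary cumulative-hierarchy case handled in \cite{MR3894041, MR4082998}, with the needed inaccessibility of $\kappa_x$ at reflecting points being automatic for $\xi\geq 1$; the base cases $\xi\in\{-1,0\}$ reduce to the already-known strong normality of $I_{\kappa,A}$ and $\NSS_{\kappa,A}$ discussed earlier in Section 2.
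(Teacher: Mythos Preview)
Your proposal is correct and follows exactly the approach the paper indicates, namely a straightforward adaptation of Bagaria's one-cardinal argument \cite[Proposition 4.4]{MR3894041} to the structures $V_\kappa(\kappa,A)$ with $\sqsubset$-regressivity in place of ordinal regressivity. One small correction to your closing remark: $I_{\kappa,A}$ is \emph{not} strongly normal (indeed $\NSS_{\kappa,A}$ is the minimal strongly normal ideal on $P_\kappa A$), but this is harmless since the proposition concerns ordinals $\xi\geq 0$ and the convention $\Pi^1_{-1}(\kappa,A)=I_{\kappa,A}$ lies outside its scope.
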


\subsection{Iterating two-cardinal ideal operators}

Given an ideal $I$ on $P_\kappa A$ and an ideal operator $\O\in\{\I_\subset,\I_\strsub,\Part_\lhd,\Ram,\Ram_\strsub\}$ we inductively define a sequence of ideals $\O^\alpha(I)$ on $P_\kappa A$ by letting
\begin{align*}
\O^0(I)&=I\\
\O^{\alpha+1}(I)&=\O(\O^\alpha(I))\\
\O^\alpha(I)&=\bigcup_{\beta<\alpha}\O^\beta(I)\text{ when $\alpha$ is a limit.}
\end{align*}

Ideals of the form $\I_\subset^\alpha(\NS_{\kappa,A})$, $\Part_\subset^\alpha(\NS_{\kappa,A})$, $\I_\strsub^\alpha(\NSS_{\kappa,A})$ and $\Part_\strsub^\alpha(\NSS_{\kappa,A})$ were studied by Kamo \cite{MR1419442}. In the remainder of the paper we prove several results involving the ideals $\O^\alpha(\Pi^1_\xi(\kappa,A))$ for $\xi<\kappa$, $\alpha<|A|^+$ and $\O\in\{\I_\strsub,\Ram_\strsub\}$. For example, recall that Baumgartner proved that the ineffable ideal on a cardinal is equal to the ideal generated by the subtle ideal and the $\Pi^1_2$-indescribability ideal. Generalizing this, we will show that in many cases these ideals $\O^\alpha(\Pi^1_\xi(\kappa,A))$, for $\O\in\{\I_\strsub,\Ram_\strsub\}$, can be obtained as the ideal generated by pair of smaller sub-ideals. We will also prove several hierarchy results. For example, it is easy to see that $\beta<\alpha$ implies $\O^\beta(I)\subseteq\O^\alpha(I)$. We will show that when the ideals involved are nontrivial it follows that $\beta<\alpha<|A|^+$ implies $\O^\beta(\Pi^1_\xi(\kappa,A))\subsetneq\O^\alpha(\Pi^1_\xi(\kappa,A))$. We will also show that as $\alpha$ increases, the large cardinal notions associated to $\O^\alpha(\Pi^1_\xi(\kappa,A))$ increase in consistency strength.

\subsection{Generating ideals}

The following lemma is due to Abe \cite[Theorem D]{MR1635559} in the case $\beta=1$. Versions of this lemma in the one-cardinal case were first established by Baumgartner \cite[Lemma 7.1]{MR0384553} and later by the first author \cite[Lemma 2.20]{MR4206111} as well as the first author and Peter Holy \cite{MR4594301}.

\begin{lemma}\label{lemma_ineffability_implies_indescribability}

Suppose $S\subseteq P_\kappa A$, $0<\beta<\kappa$ and for every $(S,\strsub)$-list $\vec{S}=\langle S_x\mid x\in S\rangle$ there is a $B\in\bigcap_{\xi<\beta}\Pi^1_\xi(\kappa,A)^+$ such that $B$ is homogeneous for $\vec{S}$. Then $S$ is a $\Pi^1_{\beta+1}$-indescribable subset of $P_\kappa A$.
\end{lemma}

\begin{proof}
Since $\kappa$ is Mahlo, there is a bijection $b:V_\kappa(\kappa, A)\to P_\kappa A$. By \cite[Lemma 1.3(4)]{MR1635559}, the set
\[C_b=\{x\in P_\kappa A\st b[V_{\kappa_x}(\kappa_x,x)]= P_{\kappa_x}x\}\]
is a weak club in $P_\kappa A$.

We proceed by induction on $\beta$. The base case in which $\beta=1$ is handled by \cite[Theorem D]{MR1635559}. The successor case is similar to \cite[Theorem D]{MR1635559}; we provide details for the reader's convenience. 

Suppose $\beta=\eta+1$ is a successor ordinal. To show that $S$ is $\Pi^1_{\beta+1}$-indescribable in $P_\kappa A$ it suffices to show that $T=S\cap C_b$ is $\Pi^1_{\beta+1}$-indescribable in $P_\kappa A$. Since $\I_\strsub(\bigcup_{\xi<\beta}\Pi^1_\xi(\kappa,A))$ is a strongly normal ideal on $P_\kappa A$, our assumption that every $(S,\strsub)$-list has a homogeneous set in $P(S)\cap \bigcap_{\xi<\beta}\Pi^1_\xi(\kappa,A)^+$ implies that every $(T,\strsub)$-list has a homogeneous set in $P(T)\cap \bigcap_{\xi<\beta}\Pi^1_\xi(\kappa,A)^+$.

Suppose $R\subseteq V_\kappa(\kappa,A)$ and suppose $\varphi$ is a $\Pi^1_{\eta+2}$ sentence of the form $\forall X\exists Y\psi$ where $\psi$ is $\Pi^1_\eta$ such that
\begin{align}(V_\kappa(\kappa,A),\in,R)\models \forall X\exists Y\psi.\label{eqn_univ}\end{align}
For contradiction, assume that for each $x\in T$ we have
\begin{align}(V_{\kappa_x}(\kappa_x,x),\in,R\cap V_{\kappa_x}(\kappa_x,x))\models\exists X\forall Y\lnot\psi.\label{eqn_build_a_list}
\end{align}
For each $x\in T$ let $A_x\subseteq V_{\kappa_x}(\kappa_x,x)$ witness (\ref{eqn_build_a_list}). Then $\vec{T}=\<b[A_x]\st x\in T\>$ is a $(T,\strsub)$ list, and so by our assumption on $S$, there is a $B\in P(T)\cap \bigcap_{\xi<\beta}\Pi^1_\xi(\kappa,A)^+$ homogeneous for $\vec{T}$. Let $X^*=\bigcup_{x\in B}A_x$, then by (\ref{eqn_univ}), there is a $Y^*\subseteq V_\kappa(\kappa,A)$ such that 
\[(V_\kappa(\kappa,A),\in,R,X^*,Y^*)\models\psi.\]
Since $B$ is $\Pi^1_\eta$-indescribable and $\psi$ is a $\Pi^1_\eta$ sentence, there is some $x\in B$ such that
\[(V_{\kappa_x}(\kappa_x,x),\in,R\cap V_{\kappa_x}(\kappa_x,x), X^*\cap V_{\kappa_x}(\kappa_x,x), Y^*\cap V_{\kappa_x}(\kappa_x,x))\models \psi.\]
By the homogeneity of $B$ we have $X^*\cap V_{\kappa_x}(\kappa_x,x)=A_x$, which contradicts \ref{eqn_build_a_list}.

Next suppose $\beta$ is a limit ordinal. As before, it suffices to show that $T=S\cap C_b$ is $\Pi^1_{\beta+1}$-indescribable. To this end, let $R\subseteq V_\kappa(\kappa,A)$ and let $\forall X\bigvee_{\xi<\beta}\varphi_\xi$ be a $\Pi^1_{\beta+1}$ sentence with
\[(V_\kappa(\kappa,A),\in,R)\models \forall X\bigvee_{\xi<\beta}\varphi_\xi.\]
For contradiction, suppose that for each $x\in T$ there is an $A_x\subseteq V_{\kappa_x}(\kappa_x,x)$ such that 
\begin{align}(V_{\kappa_x}(\kappa_x,x),\in,R\cap V_{\kappa_x}(\kappa_x,x),A_x)\models\bigwedge_{\xi<\beta}\lnot\varphi_\xi.\label{eqn_will_contradict}\end{align}
Then $\vec{T}=\<b[A_x]\st x\in T\>$ is a $(T,\strsub)$-list, hence there is a $B\in P(T)\cap \bigcap_{\xi<\beta}\Pi^1_\xi(\kappa,A)^+$ homogeneous for $\vec{T}$. Let $X^*=\bigcup_{x\in B}A_x$. Then for some $\xi<\beta$ we have 
\[(V_\kappa(\kappa,A),\in,R,X^*)\models\varphi_\xi,\]
and thus there is an $x\in B$ such that
\[(V_{\kappa_x}(\kappa_x,x),\in,R\cap V_{\kappa_x}(\kappa_x,x),X^*\cap V_{\kappa_x}(\kappa_x,x))\models\varphi_\xi,\]
but this contradicts \ref{eqn_will_contradict} since $X^*\cap V_{\kappa_x}(\kappa_x,x)=A_x$.
\end{proof}

Next we will show that ideals of the form $\I_\strsub(\Pi^1_\xi(\kappa,A))$ can be obtained as ideals generated by a pair of sub ideals, and furthermore, this leads to a characterization of the nontriviality of these ideals. For this result we will need the following two-cardinal notion of subtlety studied by Abe \cite[Definition 2.3]{MR2210149}. 

\begin{definition}
Suppose $\kappa$ is regular and $\kappa\leq|A|$. A set $S\subseteq P_\kappa A$ is \emph{strongly subtle} if for every $(S,\strsub)$-list $\vec{S}=\<S_x\subseteq P_{\kappa_x}x\st x\in S\>$ and every $C\in\NSS_{\kappa,A}^*$ there exists $y,z\in S\cap C$ with $y\strsub z$ and $S_y=S_z\cap P_{\kappa_y}y$. We let 
\[\NSSub_{\kappa,A}=\{X\subseteq P_\kappa A\st \text{$X$ is not strongly subtle}\}.\]
\end{definition}

Among other things, Abe proved \cite[Proposition 2.5(1)]{MR2210149} that $\NSSub_{\kappa,A}$ is a strongly normal ideal on $P_\kappa A$.

\begin{theorem}\label{theorem_strong_ineffability_ideal_characterization}
For all $n<\omega$, we have 
\[\I_\strsub(\Pi^1_n(\kappa,A))=\overline{\NSSub_{\kappa,A}\cup \Pi^1_{n+2}(\kappa,A)}.\footnote{Given a collection $A\subseteq P(X)$, where $X$ is some set, we write $\overline{A}$ to denote the ideal on $X$ generated by $A$. The set $X$ will be clear from the context.}\]
Furthermore, it is not the case the $P_\kappa A\notin \I_\strsub(\Pi^1_n(\kappa,A))$ is equivalent to $P_\kappa A\notin\NSSub_{\kappa,A}$ and $P_\kappa A\notin \Pi^1_{n+2}(\kappa,A)$, because if $\kappa$ is the least cardinal such that there is an $A$ with $\kappa\subseteq A$, $\kappa\leq|A|$ and $P_\kappa A\notin \I_\strsub(\Pi^1_n(\kappa,A))$ then there is an $x\in P_\kappa A$ such that $P_{\kappa_x}x$ is strongly subtle and $\Pi^1_{n+2}$-indescribable and yet $P_{\kappa_x}x\in\I_\strsub(\Pi^1_n(\kappa_x,x))$. 
\end{theorem}

\begin{proof}
Let $I = \overline{\NSSub_{\kappa, A}\cup \Pi^1_{n+2}(\kappa, A)}$. We show that $S \in \I_\strsub(\Pi^1_n(\kappa, A))^+$ if and only if $S \in I^+$

Suppose $S \in \I_\strsub(\Pi^1_n(\kappa, A))^+$. To show $S \in I^+$, it suffices to show $S$ is strongly subtle and $\Pi^1_{n+2}$-indescribable. Clearly $S$ is strongly subtle, and by Lemma \ref{lemma_ineffability_implies_indescribability} we know $S$ is $\Pi^1_{n+2}$-indescribable.  Thus $S \in I^+$. Conversely, suppose $S \in I^+$. For the sake of contradiction, suppose $S \in \I_\strsub(\Pi^1_n(\kappa,A))$. Then there is an $(S,\strsub)$-list $\vec{S}= \langle S_x \mid x \in S\rangle $ such that every homogeneous set for $\vec{S}$ is in the ideal $\Pi^1_n(\kappa, A)$. This is expressible by a $\Pi^1_{n+2}$-sentence $\varphi$ over $(V_{\kappa}(\kappa, A), \in  , \vec{S})$. Thus it follows that the set
\begin{align*}
    C &= \{ x \in P_\kappa A \mid (V_{\kappa_x}(\kappa_x, x), \in  , \vec{S}\cap V_{\kappa_x}(\kappa_x,x)) \models \varphi\} \\
    &= \{ x \in P_\kappa A \mid \text{every hom. set for } \vec{S}\restriction (P_{\kappa_x}x\cap S) \text{ is in } \Pi^1_n(\kappa_x, x)\}
\end{align*}
is in the filter $\Pi^1_{n+2}(\kappa, A)^*$.  Since $S \in I^+$, it follows that $S$ is not equal to the union of a non-strongly subtle set and a non-$\Pi^1_{n+2}$-indescribable set. Since $S = (S \cap C) \cup (S \backslash C)$ and $S \backslash C \in \Pi^1_{n+2}(\kappa,A)$, it follows that $S \cap C$ must be strongly subtle. As a direct consequence of \cite[Theorem B]{MR2210149}, there is some $x \in S \cap C$ for which there is an $H \subseteq S \cap C \cap P_{\kappa_x} x$ which is $\Pi^1_n$-indescribable in $P_{\kappa_x}x$ and homogeneous for $\vec{S}$. This contradicts $x \in C$.

For the remaining statement, let $\kappa$ be the least cardinal such that there is an $A$ with $P_\kappa A\notin\I_\strsub(\Pi^1_n(\kappa,A))$. We show there are many $x \in P_\kappa A$ for which $P_{\kappa_x}x$ is both subtle and $\Pi^1_{n+2}$-indescribable.  The fact that $P_\kappa A$ is strongly subtle can be expressed by a $\Pi^1_1$-sentence $\varphi$ over $V_\kappa(\kappa,A)$ and thus the set
\[ C = \{x \in P_\kappa A \mid (V_{\kappa_x}(\kappa_x,x), \in ) \models \varphi \} = \{ x \in P_\kappa A \mid P_{\kappa_x}x \text{ is strongly subtle} \} \]
is in the filter $\Pi^1_1(\kappa,A)^* \subseteq \Pi^1_{n+2}(\kappa,A)^* $.  Furthermore, by \cite[Lemma 3.8]{MR2210149} the set 
\[H = \{ x \in P_\kappa A \mid P_{\kappa_x}x \text{ is } \Pi^1_{n+2}\text{-indescribable} \} \]
is in the filter $\NSSub_{\kappa,A}^*$. Since $\I_\strsub (\Pi^1_n(\kappa, A)) \supseteq \NSSub_{\kappa,A}\cup \Pi^1_{n+2}(\kappa,A)$, it follows that $C \cap H$ is in the filter $\I_\strsub(\Pi^1_n(\kappa,A))^*$  
\end{proof}

\begin{remark}
The previous theorem can be generalized to ideals of the form $\I_\strsub^\alpha(\Pi^1_\xi(\kappa,A))$, where $\alpha,\xi<\kappa$ as is done in \cite{MR4206111} and \cite{MR4594301}. For example, to obtain $\I_\strsub^2(\Pi^1_\xi(\kappa,A))$ as the ideal generated by two proper sub-ideals, one must replace the strongly subtle ideal $\NSSub_{\kappa,A}$ with an ideal defined using a \emph{pre-operator}. The details are left to the interested reader.
\end{remark}


Next, in order to prove a version of Theorem \ref{theorem_strong_ineffability_ideal_characterization} for $\Ram_\strsub(\Pi^1_n(\kappa,A))$, we introduce another new large cardinal notion and an assoicated ideal. The following definition can be viewed as a generalization of the notion of pre-Ramseyness introduced in \cite{MR0540770} and later studied in \cite{CodyLCI, MR4206111, MR4594301, MR1077260}.

\begin{definition}
Suppose $\kappa$ is regular and $\kappa\leq |A|$. Further suppose that $\vec{I}=\<I_x\st x\in P_\kappa A\>$ is a function such that for each $x\in P_\kappa A$, $I_x$ is an ideal on $P_{\kappa_x}x$. We define an ideal $\Ram_{\strsub}^\pre(\vec{I})$ on $P_\kappa A$ by letting $S\in\Ram_\strsub^\pre(\vec{I})^+$ if and only if for every $\strsub$-regressive function $f:[S]^{<\omega}_\strsub\to P_\kappa A$ and every $C\in \NSS_{\kappa,A}^*$ there is some $x\in S\cap C$ such that there is an $H\in P(S\cap C\cap P_{\kappa_x}x)\cap I_x^+$ homogeneous for $f$.
\end{definition}

In the case where the ideals listed by the function $\vec{I}$ have a uniform definition, we will often use the notation $\Ram_\strsub^\pre(I)=\Ram_\strsub^\pre(\vec{I})$, where $I_{\kappa,A}$ is the relevant ideal on $P_\kappa A$. For example, if $\vec{I}=\<\NSS_{\kappa_x,x}\st x\in P_\kappa A\>$, when we write $\Ram_\strsub^\pre(\NSS_{\kappa,A})$ we mean $\Ram_\strsub^\pre(\vec{I})$.

\begin{theorem}\label{theorem_strong_ramsey_ideal_characterization}
For all $n<\omega$, 
\[\Ram_\strsub(\Pi^1_n(\kappa,A))=\overline{\Ram_\strsub^\pre(\Pi^1_n(\kappa,A))\cup \Pi^1_{n+2}(\kappa,A)}.\]
Furthermore, it is not the case that $P_\kappa A\notin \Ram_\strsub(\Pi^1_n(\kappa,A))$ is equivalent to $P_\kappa A\notin \Ram_\strsub^\pre(\Pi^1_n(\kappa,A))$ and $P_\kappa A\notin \Pi^1_{n+2}(\kappa,A)$, because if $\kappa$ is the least cardinal such that there is an $A$ with $\kappa\subseteq A$, $\kappa\leq|A|$ and $P_\kappa A\notin \Ram_\strsub(\Pi^1_n(\kappa,A))$ then there is an $x\in P_\kappa A$ such that the ideals $\Ram_\strsub^\pre(\Pi^1_n(\kappa_x,x))$ and $\Pi^1_n(\kappa_x,x)$ are nontrivial and yet $P_{\kappa_x}x\in\Ram_\strsub(\Pi^1_n(\kappa_x,x))$. 
\end{theorem}

The proof of Theorem \ref{theorem_strong_ramsey_ideal_characterization} is similar to that of Theorem \ref{theorem_strong_ineffability_ideal_characterization}, the only difference being that one must work with regressive functions or $(\omega,S,\strsub)$-lists instead of $(S,\strsub)$-lists. For similar results in the one-cardinal context see \cite{MR4206111, MR4594301}.

\subsection{Hierarchy results} In this section we prove several hierarchy results concerning ideals of the form $\I^\alpha(\Pi^1_\xi(\kappa,A))$ and $\Ram_\strsub^\alpha(\Pi^1_\xi(\kappa,A))$, where $\kappa$ is regular, $A$ is a set of ordinals with $\kappa\leq |A|$, $\xi<\kappa$ and $\alpha<|A|^+$. In order to handle cases in which $\alpha>\kappa$, let us briefly outline some important properties of canonical functions that we will require (see \cite[Section 2.6]{MR2768692} and \cite[Section 2]{MR725730}).

Given ordinal valued functions $f$ and $g$ with domain $P_\kappa A$ we write $f\sim g$ if and only if $\{x\in P_\kappa A\st f(x)=g(x)\}$ contains a club, and similarly for $f\leq g$ and $f<g$. It is easy to see that $\sim$ is an equivalence relation, $\leq$ is transitive and reflexive and that $<$ is transitive and well-founded. For each $f$ we let $\|f\|$ be the rank of $f$ with respect to $<$. We say that such a function $f$ is \emph{canonical} if and only if for every $g$, $\|f\|\leq\|g\|$ implies $f\leq g$; in other words, $f$ is canonical if it is minimal in the $\leq$ ordering among all ordinal-valued functions on $P_\kappa A$ of the same rank. Notice that when $f$ is canonical, $\|f\|<\|g\|$ easily implies that $f<g$.

\begin{lemma}\label{lemma_canonical} Suppose $\kappa$ is a regular uncountable cardinal and $A$ is a set of ordinals with $\kappa\leq |A|$. There is a sequence $\<f_\alpha\st\alpha<|A|^+\>$ of ordinal-valued functions defined on $P_\kappa A$ such that for all $\alpha<|A|^+$ it follows that
\begin{enumerate}
\item $f_\alpha$ is a canonical function with rank $\alpha$,
\item whenever $x\in P_\kappa A$ is such that $x\cap\kappa$ is regular and uncountable we have $f_\alpha\restrict P_{x\cap\kappa}x$ is canonical on $P_{x\cap\kappa}x$ of rank $f_\alpha(x)$ and
\item the set $\{x\in P_\kappa A\st f_\alpha(x)<|x|^+\}$ is club in $P_\kappa A$.
\end{enumerate}
\end{lemma}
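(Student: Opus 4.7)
The plan is to construct $\langle f_\alpha : \alpha < |A|^+ \rangle$ by transfinite recursion on $\alpha$, verifying (1), (2), and (3) simultaneously at each stage. Set $f_0 \equiv 0$ and, at successor stages, $f_{\alpha+1}(x) = f_\alpha(x)+1$; both cases follow immediately from the inductive hypothesis for $f_\alpha$, since pointwise addition of $1$ preserves canonicity, commutes with restriction to $P_{x \cap \kappa} x$, and keeps values below $|x|^+$ on the same club.

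For a limit ordinal $\alpha < |A|^+$, I would first fix a bijection $b_\alpha : |\alpha| \to \alpha$ and, using $|\alpha| \leq |A|$, an injection $j_\alpha : |\alpha| \to A$, and define
\[ f_\alpha(x) \;=\; \sup\{\, f_{b_\alpha(\xi)}(x) + 1 \;:\; \xi \in |\alpha| \text{ and } j_\alpha(\xi) \in x\,\}. \]
To verify $\|f_\alpha\| = \alpha$: for each $\xi < |\alpha|$ the set $\{x : j_\alpha(\xi) \in x\}$ is club and on it $f_{b_\alpha(\xi)} < f_\alpha$, yielding $\|f_{b_\alpha(\xi)}\| < \|f_\alpha\|$ and hence $\|f_\alpha\| \geq \alpha$. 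For $\|f_\alpha\| \leq \alpha$, given any $g$ with $\|g\| \geq \alpha$, the inductive hypothesis provides a club $C_\xi$ on which $f_{b_\alpha(\xi)} < g$; the diagonal intersection along $j_\alpha$, namely $\{ x \in P_\kappa A : x \in C_\xi \text{ for every } \xi \in |\alpha| \text{ with } j_\alpha(\xi) \in x \}$, is again club and forces $f_\alpha \leq g$. Clause (3) follows: on the club of $x$ with $|x| \geq \omega$ and $f_{b_\alpha(\xi)}(x) < |x|^+$ for all $\xi$ with $j_\alpha(\xi) \in x$, the defining sup ranges over at most $|x|$ ordinals each below $|x|^+$, hence remains below $|x|^+$.

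The hard part will be clause (2) at limit $\alpha$: showing that whenever $x \cap \kappa$ is regular and uncountable, $f_\alpha \restrict P_{x \cap \kappa} x$ is canonical on $P_{x \cap \kappa} x$ of rank $f_\alpha(x)$. The plan is to arrange the auxiliary data $b_\alpha$ and $j_\alpha$ coherently enough that, for a club of such $x$, the restricted data $b_\alpha \restrict (x \cap |\alpha|)$ and $j_\alpha \restrict (x \cap |\alpha|)$ serve, inside $P_{x \cap \kappa} x$, as an analogous bijection/injection pair for the ordinal $f_\alpha(x) \leq |x|^+$. Applying the inductive version of (2) to each $f_{b_\alpha(\xi)}$ then shows that the externally defined $f_\alpha \restrict P_{x \cap \kappa} x$ agrees, modulo clubs, with the internally built canonical function of rank $f_\alpha(x)$; uniqueness of canonical functions of a given rank modulo clubs closes the argument. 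Regularity and uncountability of $x \cap \kappa$ are essential here to guarantee that diagonal intersections of fewer than $x \cap \kappa$ many clubs computed inside $P_{x \cap \kappa} x$ remain clubs, so that the internal rank computation on $P_{x\cap\kappa}x$ mirrors the external one.
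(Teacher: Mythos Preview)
The paper does not actually prove this lemma: it declares the proof ``standard and left to the reader'' and points to Baldwin \cite[Theorem 2.12]{MR725730} for (1) and (2), noting that (3) is implicit there, and to Foreman's generic-ultrapower description as an alternative route. Your recursive construction is exactly the Baldwin-style argument the paper has in mind, and your verifications of (1) and (3) are correct.

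Your treatment of (2) is more tentative than it needs to be. No special ``coherence'' of the auxiliary data $b_\alpha,j_\alpha$ is required, and the conclusion holds for \emph{every} $x$ with $x\cap\kappa$ regular uncountable, not merely a club of such $x$. The point is that your limit formula is local: for $y\in P_{x\cap\kappa}x$ only those $\xi$ with $j_\alpha(\xi)\in x$ can contribute, so
\[
(f_\alpha\restrict P_{x\cap\kappa}x)(y)=\sup\{\,f_{b_\alpha(\xi)}(y)+1: \xi\in|\alpha|,\ j_\alpha(\xi)\in x,\ j_\alpha(\xi)\in y\,\}.
\]
By the inductive instance of (2), each $f_{b_\alpha(\xi)}\restrict P_{x\cap\kappa}x$ is canonical on $P_{x\cap\kappa}x$ of rank $f_{b_\alpha(\xi)}(x)$, and the index set $\{j_\alpha(\xi):\xi\in|\alpha|,\ j_\alpha(\xi)\in x\}$ is a subset of $x$, so the \emph{same} diagonal-intersection argument you used for (1) applies verbatim inside $P_{x\cap\kappa}x$ (normality of the club filter there is exactly where regularity and uncountability of $x\cap\kappa$ enter). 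This yields directly that $f_\alpha\restrict P_{x\cap\kappa}x$ is canonical of rank $\sup\{f_{b_\alpha(\xi)}(x)+1:\xi\in|\alpha|,\ j_\alpha(\xi)\in x\}=f_\alpha(x)$, with no appeal to uniqueness-mod-clubs needed.
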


The proof of Lemma \ref{lemma_canonical} is standard and is left to the reader. For example, Baldwin established the existence of a sequence $\<f_\alpha\st\alpha<|A|^+\>$ satisfying \ref{lemma_canonical}(1) and \ref{lemma_canonical}(2) for all $\alpha$ (see \cite[Theorem 2.12]{MR725730}), and the fact that (3) can be obtained for all $\alpha$ is implicit in Baldwin's proof. Let us also remark, that one can also prove Lemma \ref{lemma_canonical} by using the definition of $\<f_\alpha\st\alpha<|A|^+\>$ stated by Foreman \cite[Section 2.6]{MR2768692} and the fact that each $f_\alpha$ provides a representative of the ordinal $\alpha$ in any generic ultrapower obtained by forcing with $P(P_\kappa A)/I$ where $I$ is a countably complete normal ideal on $P_\kappa A$ (see \cite[Proposition 2.34]{MR2768692}).

\begin{lemma}\label{lemma_pos_union_of_pos}
Suppose $\kappa$ is a regular uncountable cardinal, $A$ is a set of ordinals with $\kappa\leq |A|$, $\xi<\kappa$, $\alpha<|A|^+$ and $\O\in\{\I_\strsub,\Ram_\strsub\}$. If $S\in \O^\alpha(\Pi^1_\xi(\kappa,A))^+$ and for each $x\in S$ we have a set $S_x\in O^{f_\alpha(x)}(\Pi^1_\xi(\kappa_x,x))^+$, then $\bigcup_{x\in S}S_x\in\O^\alpha(\Pi^1_\xi(\kappa,A))^+$.
\end{lemma}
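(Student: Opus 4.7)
The plan is simultaneous induction on $\alpha$ for both $\O \in \{\I_\sqsubset, \Ram_\sqsubset\}$, leaning throughout on Lemma~\ref{lemma_canonical} (in particular on $f_0 \equiv 0$, on $f_{\alpha+1} = f_\alpha + 1$ on a club, and on $f_\beta < f_\alpha$ on a club whenever $\beta < \alpha$). The base and limit cases are routine. For $\alpha = 0$ I would reflect any $\Pi^1_\xi$ sentence $\varphi$ with parameter $R \subseteq V_\kappa(\kappa,A)$ first through $S$ to some $x \in S$, then through $S_x$ to some $y \in S_x \subseteq T$, matching parameters via the inclusions $V_{\kappa_y}(\kappa_y, y) \subseteq V_{\kappa_x}(\kappa_x, x) \subseteq V_\kappa(\kappa, A)$. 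For limit $\alpha$, any $\beta < \alpha$ has $S \in \O^\beta(\Pi^1_\xi(\kappa,A))^+$ since $\O^\alpha$ is the union of its predecessors, and on a club $S_x \in \O^{f_\beta(x)}(\Pi^1_\xi(\kappa_x,x))^+$ because $\O^{f_\beta(x)} \subseteq \O^{f_\alpha(x)}$; the inductive hypothesis at $\beta$ then yields $T \in \O^\beta(\Pi^1_\xi(\kappa,A))^+$ for every such $\beta$, whence $T \in \O^\alpha(\Pi^1_\xi(\kappa,A))^+$.

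For the successor step with $\O = \I_\sqsubset$, WLOG $f_{\alpha+1}(x) = f_\alpha(x) + 1$ on $S$. Given a $(T, \sqsubset)$-list $\vec{T} = \langle T_y : y \in T \rangle$, I would extract for each $x \in S$ a set $H_x \subseteq S_x$ homogeneous for $\vec{T} \restrict S_x$ in $\O^{f_\alpha(x)}(\Pi^1_\xi(\kappa_x,x))^+$, and set $U_x = \bigcup_{y \in H_x} T_y \subseteq P_{\kappa_x}x$; using homogeneity together with the $\sqsubset$-unboundedness of $H_x$ one gets $T_y = U_x \cap P_{\kappa_y}y$ for every $y \in H_x$. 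Feeding the $(S, \sqsubset)$-list $\langle U_x : x \in S \rangle$ into the $\I_\sqsubset$-positivity of $S$ produces a homogeneous $H \subseteq S$ in $\O^\alpha(\Pi^1_\xi(\kappa,A))^+$, and setting $V := \bigcup_{x \in H} U_x$ one has $V \cap P_{\kappa_x}x = U_x$ for $x \in H$. Consequently $T_y = V \cap P_{\kappa_y}y$ for every $y \in \bigcup_{x \in H} H_x$, so this union is homogeneous for $\vec{T}$ and, by the inductive hypothesis at level $\alpha$ applied to $H$ and $\langle H_x : x \in H \rangle$, it lies in $\O^\alpha(\Pi^1_\xi(\kappa,A))^+$.

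The successor step with $\O = \Ram_\sqsubset$ is the main obstacle, because $\vec{T}(y_1,\ldots,y_n)$ depends on the whole tuple rather than on $y_1$ alone, so the naive union $\bigcup_{x\in H}H_x$ need not yield a tuple-homogeneous set. Using the $(\omega, S, \sqsubset)$-list characterization proved earlier, each $x \in S$ yields $H_x \subseteq S_x$ in $\O^{f_\alpha(x)}(\Pi^1_\xi(\kappa_x,x))^+$ together with coefficients $U^n_x \subseteq P_{\kappa_x}x$ such that $\vec{T}(y_1,\ldots,y_n) = U^n_x \cap P_{\kappa_{y_1}}y_1$ on $[H_x]^n_\sqsubset$. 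My plan is to package the sequence $\langle U^n_x : n < \omega \rangle$ and the set $H_x$ itself into a single $(\omega, S, \sqsubset)$-list via a uniformly definable pairing of $P_{\kappa_x}x \times 2$ with $P_{\kappa_x}x$, available on a weak club. Homogenizing this list on $H \subseteq S$ in $\O^\alpha(\Pi^1_\xi(\kappa,A))^+$ produces global sets $U^n, H^* \subseteq P_\kappa A$ with $U^n_x = U^n \cap P_{\kappa_x}x$ and $H_x = H^* \cap P_{\kappa_x}x$ for every $x \in H$; the second identity forces the crucial nesting $H_{x_1} \subseteq H_{x_2}$ whenever $x_1 \sqsubset x_2$ lie in $H$.

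Given a $\sqsubset$-increasing tuple $(y_1,\ldots,y_n)$ from $\tilde{T} := \bigcup_{x \in H} H_x = H^*$, unboundedness of $H$ lets me pick $x^* \in H$ above each of the finitely many parents $x_i \in H$ witnessing $y_i \in H_{x_i}$; nesting then puts all the $y_i$ into $H_{x^*}$, so $\vec{T}(y_1,\ldots,y_n) = U^n_{x^*} \cap P_{\kappa_{y_1}}y_1 = U^n \cap P_{\kappa_{y_1}}y_1$, giving the desired tuple-homogeneity of $\tilde{T}$ for $\vec{T}$. The inductive hypothesis at level $\alpha$ applied to $H$ and $\langle H_x : x \in H \rangle$ then gives $\tilde{T} \in \O^\alpha(\Pi^1_\xi(\kappa,A))^+$, completing the successor step. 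The most delicate point of the outline is the packaging step, where one must verify that the combined $(\omega, S, \sqsubset)$-list actually takes values in $P(P_{\kappa_{x_1}}x_1)$ and that the decoding respects $\sqsubset$-inclusions across distinct basepoints; a uniformly definable Gödel-style pairing on $V_\kappa(\kappa, A)$, together with the closure properties of weak clubs exploited in the proof of Theorem~\ref{theorem_strong_ramsey_ideal_characterization}, should suffice.
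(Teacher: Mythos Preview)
Your argument is correct, and the base and limit cases match the paper's. The main difference lies in the successor step for $\Ram_\sqsubset$: the paper sidesteps what you call the ``main obstacle'' by working directly with a $\sqsubset$-regressive function $f:[\bigcup_{x\in S}S_x]^{<\omega}_\sqsubset\to P_\kappa A$ rather than with an $(\omega,T,\sqsubset)$-list. For each $x$ one takes $H_x\subseteq S_x$ in $\Ram_\sqsubset^{f_\eta(x)}(\Pi^1_\xi(\kappa_x,x))^+$ homogeneous for $f\restrict[S_x]^{<\omega}_\sqsubset$; crucially, this just means $f$ is \emph{constant} on each $[H_x]^n_\sqsubset$, so there are no coefficients $U^n_x$ to track. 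The paper then homogenizes only the ordinary $(S,\sqsubset)$-list $\langle H_x:x\in S\rangle$ (extended arbitrarily to an $(\omega,S,\sqsubset)$-list) to obtain $H$ in $\Ram_\sqsubset^\eta(\Pi^1_\xi(\kappa,A))^+$ with the nesting $H_x=H_{x'}\cap P_{\kappa_x}x$ for $x\sqsubset x'$ in $H$. Any tuple from $\bigcup_{x\in H}H_x$ then lies in a single $H_{x^*}$, and the constant values of $f$ automatically cohere across the $H_x$'s because $[H_x]^n_\sqsubset\subseteq[H_{x'}]^n_\sqsubset$ once nesting holds.

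So your packaging of both $H_x$ and the $U^n_x$'s into a combined $(\omega,S,\sqsubset)$-list via a pairing function is valid but unnecessary: it is an artifact of choosing the list characterization for the \emph{target} set $T=\bigcup_x S_x$ rather than the regressive-function one. Your $\I_\sqsubset$ successor step is also a slight variant --- you homogenize the guessed sets $\langle U_x\rangle$ rather than the homogeneous sets $\langle H_x\rangle$ --- but this is harmless and equally short. The paper's route buys a simpler encoding (a single $(S,\sqsubset)$-list instead of a paired $(\omega,S,\sqsubset)$-list) at the cost of nothing; your route has the merit of making the global witnesses $U^n$ explicit, which is closer in spirit to how one typically states homogeneity for $(\omega,S,\sqsubset)$-lists.
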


\begin{proof}
We provide a proof for the case in which $\O=\R$; the case in which $\O=\I$ is essentially the same, only one must replace regressive functions by lists.

Suppose $\alpha=0$. Suppose $S\in\Pi^1_\xi(\kappa,A)^+$ and for each $x\in S$ we have $S_x\in \Pi^1_\xi(\kappa_x,x)^+$. We must show that $\bigcup_{x\in S}S_x\in\Pi^1_\xi(\kappa,A)^+$. Fix $R\subseteq V_\kappa(\kappa,A)$ and let $\varphi$ be a $\Pi^1_\xi$ sentence such that $(V_\kappa(\kappa,A),\in,R)\models\varphi$. Since $S\in\Pi^1_\xi(\kappa,A)^+$, there is an $x\in S$ such that $(V_{\kappa_x}(\kappa_x,x),\in,R\cap V_{\kappa_x}(\kappa_x,x))\models\varphi$. Now since $S_x\in \Pi^1_\xi(\kappa_x,x)^+$, there is a $y\in S_x$ such that $(V_{\kappa_y}(\kappa_y,y),\in,R\cap V_{\kappa_y}(\kappa_y,y))\models\varphi$. Hence $\bigcup_{x\in S}S_x\in\Pi^1_\xi(\kappa,A)^+$.

Now, suppose $\alpha=\eta+1>0$ is a successor ordinal and the result holds for $\eta$. Fix a $\strsub$-regressive function $f:[\bigcup_{x\in S}S_x]_\strsub^{<\omega}\to P_\kappa A$. Fix a club $C_0\subseteq P_\kappa A$ such that $x\in C_0$ implies $f_\alpha(x)=f_{\eta}(x)+1$. By assumption, for each $x\in S\cap C_0$ we have $S_x\in \Ram_\strsub^{f_\eta(x)+1}(\Pi^1_\xi(\kappa_x,x))^+$, and thus there is a set $H_x\in P(S_x)\cap \Ram_\strsub^{f_\eta(x)}(\Pi^1_\xi(\kappa_x,x))^+$ homogeneous for $f\restrict[S_x]_\strsub^{<\omega}\to P_\kappa A$. Since $S\in \Ram_\strsub^\alpha(\Pi^1_\xi(\kappa,A))^+$, it easily follows that the $(S,\strsub)$-sequence $\vec{H}=\<H_x\st x\in S\>$ has a homogeneous set $H\in P(S)\cap \Ram_\strsub^\eta(\Pi^1_\xi(\kappa,A))^+$ (just extend the $(S,\strsub)$-sequence to any $(\omega,S,\strsub)$-sequence). By our inductive hypothesis, $\bigcup_{x\in H}H_x\in \Ram_\strsub^\eta(\Pi^1_\xi(\kappa,A))^+$. Now it is easy to verify that $\bigcup_{x\in H}H_x$ is homogeneous for $f$.

If $\alpha$ is a limit ordinal and the result holds for ordinals less than $\alpha$, it is easy to verify that the result holds for $\alpha$ using the fact that $\Ram_\strsub^\alpha(\Pi^1_\xi(\kappa,A))=\bigcup_{\eta<\alpha}\Ram_\strsub^\eta(\Pi^1_\xi(\kappa,A))$.

\end{proof}

To prove a hierarchy result (Theorem \ref{theorem_hierarchy_1}), we need the following.

\begin{lemma}[{\cite[Theorem 2.12]{MR725730}}]\label{lemma_baldwin_2} Suppose $\kappa$ is a regular uncountable cardinal and $A$ is a set of ordinals with $\kappa\leq|A|$. The following properties of canonical functions on $P_\kappa A$ hold.
\begin{enumerate}
\item[(a)] If $f\leq g$ and $g\leq f$ then $\{x\in P_\kappa A\st f(x)=g(x)\}$ is in the club filter on $P_\kappa A$.
\item[(b)] If $f$ and $g$ are both canonical on $P_\kappa A$ and $\|f\|=\|g\|$ then $f$ and $g$ are equal on a club.
\item[(c)] If $f$ is canonical on $P_\kappa A$ and $g(x)=f(x)+1$ for club-many $x\in P_\kappa A$, then $g$ is canonical and $\|g\|=\|f\|+1$.
\item[(d)] If $\<f_\gamma\st\gamma\in A\>$ is a seqeunce of canonical functions on $P_\kappa A$ and $f$ is an ordinal-valued function on $P_\kappa A$ defined by $f(x)=\bigcup_{\eta\in x}f_\eta(x)$, then $f$ is canonical and $\|f\|=\bigcup_{\eta\in A}\|f_\eta\|$.
\end{enumerate}

\end{lemma}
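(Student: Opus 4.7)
Since this lemma is attributed to Baldwin and the arguments are essentially contained in \cite[Theorem 2.12]{MR725730}, the plan is to follow Baldwin's proof; below I sketch the main ideas for each part. Throughout, the guiding principle is a transfer between ranks and clubs: if $f$ is canonical and $\|f\|\leq\|h\|$, then $f\leq h$ by definition; and if $\|f\|<\|h\|$, then by the definition of rank in the well-founded order $<$ there is some $h'<h$ with $\|h'\|\geq\|f\|$, whence canonicity of $f$ yields $f\leq h'<h$, so $f<h$.

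Parts (a) and (b) are essentially definitional. For (a), the clubs witnessing $f\leq g$ and $g\leq f$ intersect in a club on which $f(x)=g(x)$, using regularity of $\kappa$ to close clubs under small intersections. Part (b) is immediate from (a): by canonicity of both $f$ and $g$ together with $\|f\|=\|g\|$, we have $f\leq g$ and $g\leq f$, and then (a) applies.

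For part (c), I would first compute the rank of $g$ and then verify minimality. That $f<g$ on a club gives $\|f\|<\|g\|$. Conversely, if $h<g$, then $h(x)<f(x)+1$ on a club, so $h\leq f$ and $\|h\|\leq\|f\|$, showing $\|g\|\leq\|f\|+1$. For canonicity, suppose $\|h\|\geq\|g\|=\|f\|+1$. By the transfer principle above, $\|f\|<\|h\|$ yields $f<h$, so $f(x)+1\leq h(x)$ on a club, i.e., $g\leq h$.

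For part (d), the main obstacle is controlling the rank of the diagonal sup $f(x)=\sup_{\zeta\in x}f_\zeta(x)$. The plan is, first, to show $\|f_\eta\|\leq\|f\|$ for each $\eta\in A$: on the set $\{x\in P_\kappa A:\eta\in x\}$, which lies in $I_{\kappa,A}^*$, one has $f_\eta(x)\leq f(x)$, so $f_\eta\leq f$ and $\|f_\eta\|\leq\|f\|$, yielding $\sup_{\eta\in A}\|f_\eta\|\leq\|f\|$. For the reverse inequality and the canonicity of $f$, suppose $\|h\|\geq\sup_{\eta\in A}\|f_\eta\|$; then $\|h\|\geq\|f_\eta\|$ for each $\eta\in A$, so by canonicity there is a club $C_\eta$ on which $f_\eta\leq h$. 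The diagonal intersection
\[
\bigtriangleup_{\eta\in A}C_\eta=\{x\in P_\kappa A:x\in\bigcap_{\eta\in x}C_\eta\}
\]
is still a club by normality of the club filter, and on this club we have $f(x)=\sup_{\eta\in x}f_\eta(x)\leq h(x)$, giving $f\leq h$. Thus $f$ is canonical with $\|f\|=\sup_{\eta\in A}\|f_\eta\|$. The hardest step is the diagonal intersection argument, and it is essentially this that makes the normal (rather than merely countably complete) nature of the club filter on $P_\kappa A$ indispensable.
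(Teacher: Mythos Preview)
The paper does not provide its own proof of this lemma; it is stated with attribution to Baldwin \cite[Theorem 2.12]{MR725730} and used as a black box. Your sketch is correct and follows the standard route, which is essentially Baldwin's: the transfer principle you isolate (that canonicity of $f$ together with $\|f\|<\|h\|$ yields $f<h$) is exactly what drives (c), and the diagonal-intersection argument is the heart of (d). One small point worth making explicit in (d): after showing that $f\leq h$ whenever $\|h\|\geq\sup_{\eta\in A}\|f_\eta\|$, you conclude both the rank computation and canonicity at once by invoking the fact that $f\leq h$ implies $\|f\|\leq\|h\|$ (an easy induction) together with the existence of some $h$ of rank exactly $\sup_{\eta\in A}\|f_\eta\|$; the latter holds because the ranks of a well-founded relation form an initial segment of the ordinals and you have already shown $\|f\|\geq\sup_{\eta\in A}\|f_\eta\|$.
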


\begin{lemma}
Suppose $\kappa$ is a regular uncountable cardinal, $A$ is a set of ordinals with $\kappa\leq |A|$, $\xi<\kappa$, $\alpha<|A|^+$ and $\O\in\{\I_\strsub,\Ram_\strsub\}$. If $P_\kappa A\in \O^\alpha(\Pi^1_\xi(\kappa,A))^+$ where $\alpha<|A|^+$, then the set
\[X_\alpha=\{x\in P_\kappa A\st P_{\kappa_x}x\in \O^{f_\alpha(x)}(\Pi^1_\xi(\kappa_x,x))\}\]
is in $\O^\alpha(\Pi^1_\xi(\kappa,A))^+$.
\end{lemma}

\begin{proof}
We will prove this for $\O=\Ram_\strsub$; the case in which $\O=\I_\strsub$ is similar.

We follow \cite[Theorem 5.2]{MR1077260} and proceed by induction on $\kappa$. We assume the result holds for all cardinals less than $\kappa$ and prove that it holds for $\kappa$. If 
\[S=\{x\in P_\kappa A\st P_{\kappa_x}x\in\Ram_\strsub^{f_\alpha(x)}(\Pi^1_\xi(\kappa_x,x))^+\}\] 
is in $\Ram_\strsub^\alpha(\Pi^1_\xi(\kappa,A))$, then $X_\alpha\in \Ram_\strsub^\alpha(\Pi^1_\xi(\kappa,A))^*$ and we are done. So, we assume that $S\in\Ram_\strsub^\alpha(\Pi^1_\xi(\kappa,A))^+$.

For each $z\in P_\kappa A$, we let $\<f^z_\eta\st\eta<|z|^+\>$ denote a sequence of canonical functions defined on $P_{\kappa_z} z$ satisfying conditions analogous to Lemma \ref{lemma_canonical}(1)-(3). Let $C_\alpha\subseteq P_\kappa A$ be a club such that for all $z\in C_\alpha$ the following properties hold:
\begin{enumerate}
\item $z\cap\kappa<\kappa$,
\item $f_\alpha(z)<|z|^+$,
\item when $z\cap\kappa$ is a regular uncountable cardinal we have that $f_\alpha\restrict P_{\kappa_z}z$ is a canonical function on $P_{\kappa_z}z$ of rank $f_\alpha(z)$, and thus $f_\alpha\restrict P_{\kappa_z}z=f^z_{f_\alpha(z)}$.
\end{enumerate}
Let us show that for each $z\in C_\alpha\cap S$ with $\kappa_z>\xi$, the set $X_\alpha\cap P_{\kappa_z}z$ is in $\Ram_\strsub^{f_\alpha(z)}(\Pi^1_\xi(\kappa_z,z))^+$; then, by Lemma \ref{lemma_pos_union_of_pos}, it will follow that $X_\alpha\in\Ram_\strsub^\alpha(\Pi^1_\xi(\kappa,A))^+$. Fix $z\in C_\alpha\cap S$. Notice that $\kappa_z<\kappa$ and $f_\alpha(z)<|z|^+$. Thus, by our inductive hypothesis, the set
\[\{x\in P_{\kappa_z}z\st P_{\kappa_x}x\in \Ram_\strsub^{f_{f_\alpha(z)}^{z}(x)}(\Pi^1_\xi(\kappa_x,x))\}\]
is in $\Ram_\strsub^{f_\alpha(z)}(\Pi^1_\xi(\kappa_z,z))^+$. But, since $z\in C_\alpha$ we have $f^z_{f_\alpha(z)}(x)=f_\alpha(x)$, and thus the set 
\[X_\alpha\cap P_{\kappa_z}z=\{x\in P_{\kappa_z}z\st P_{\kappa_x}x\in \Ram_\strsub^{f_\alpha(x)}(\Pi^1_\xi(\kappa_x,x))\}\]
is in $\Ram_\strsub^{f_\alpha(z)}(\Pi^1_\xi(\kappa_z,z))^+$.

\end{proof}

\begin{theorem}\label{theorem_hierarchy_1}
Suppose $\kappa$ is a regular uncountable cardinal, $A$ is a set of ordinals with $\kappa\leq |A|$, $\xi<\kappa$, $\alpha<|A|^+$ and $\O\in\{\I_\strsub,\Ram_\strsub\}$. If $P_\kappa A\in\Ram_\strsub^{\alpha+1}(\Pi^1_\xi(\kappa,A))^+$, then for all $\beta\leq\alpha$ and all sets $X\in \Ram_\strsub^\beta(\Pi^1_\xi(\kappa,A))^+$, it follows that the set
\[\{x\in P_\kappa A\st X\cap P_{\kappa_x}x\in\Ram_\strsub^{f_\beta(x)}(\Pi^1_\xi(\kappa_x,x))^+\}\]
is in the filter $\Ram_\strsub^{\beta+1}(\Pi^1_\xi(\kappa,A))^*$.
\end{theorem}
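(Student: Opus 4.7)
The plan is to prove the theorem by induction on $\beta \leq \alpha$, treating the two cases $\O \in \{\I_\sqsubset, \Ram_\sqsubset\}$ in parallel; I describe the argument for $\O = \Ram_\sqsubset$, and in the $\I_\sqsubset$ case one replaces $\sqsubset$-regressive functions by $(S,\sqsubset)$-lists (or equivalently $(\omega,S,\sqsubset)$-lists) throughout. Set
\[
Y_\beta(X) := \{x \in P_\kappa A \st X \cap P_{\kappa_x}x \in \Ram_\sqsubset^{f_\beta(x)}(\Pi^1_\xi(\kappa_x, x))\};
\]
the goal is to show $Y_\beta(X) \in \Ram_\sqsubset^{\beta+1}(\Pi^1_\xi(\kappa, A))$ whenever $X \in \Ram_\sqsubset^\beta(\Pi^1_\xi(\kappa, A))^+$.

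For the base case $\beta = 0$, suppose $X \in \Pi^1_\xi(\kappa, A)^+$ and for contradiction assume $Y_0(X) \in \Ram_\sqsubset(\Pi^1_\xi(\kappa, A))^+$. For each $x \in Y_0(X)$ fix a describability witness $(R_x, \varphi_x)$, with $R_x \subseteq V_{\kappa_x}(\kappa_x, x)$ and $\varphi_x$ a $\Pi^1_\xi$-sentence true in $(V_{\kappa_x}(\kappa_x, x), \in, R_x)$ with no reflection into $X \cap P_{\kappa_x}x$. Since the collection of $\Pi^1_\xi$-sentences has size less than $\kappa$ and $\Ram_\sqsubset(\Pi^1_\xi(\kappa, A))$ is $\kappa$-complete, shrink $Y_0(X)$ to a positive subset on which $\varphi_x = \varphi$ is fixed. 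Using the bijection $b : V_\kappa(\kappa, A) \to P_\kappa A$ and the associated weak club $C_b$ from the proof of Lemma \ref{lemma_ineffability_implies_indescribability}, one may assume $T_x := b[R_x] \subseteq P_{\kappa_x}x$ for each $x$ in the shrunken set. Since $\Ram_\sqsubset(\Pi^1_\xi(\kappa, A)) \supseteq \I_\sqsubset(\Pi^1_\xi(\kappa, A))$, the $(S,\sqsubset)$-list $\langle T_x \rangle$ admits a homogeneous $H \in \Pi^1_\xi(\kappa, A)^+$ on which the $R_x$ coherently restrict a common $R \subseteq V_\kappa(\kappa, A)$. The $\Pi^1_\xi$-indescribability of $X$ applied to $(V_\kappa(\kappa, A), \in, R)$ with sentence $\varphi$ then produces $y \in X$ reflecting $\varphi$ inside $P_{\kappa_x}x$ for some $x \in H$, contradicting the choice of $(R_x, \varphi_x)$.

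For the successor step, assume the result at level $\beta$ and let $X \in \Ram_\sqsubset^{\beta+1}(\Pi^1_\xi(\kappa, A))^+$. Suppose for contradiction $Y := Y_{\beta+1}(X) \in \Ram_\sqsubset^{\beta+2}(\Pi^1_\xi(\kappa, A))^+$; by Lemma \ref{lemma_baldwin_2}(c) restrict to the club on which $f_{\beta+1}(x) = f_\beta(x) + 1$. For each $x \in Y$, fix a $\sqsubset$-regressive $h_x : [X \cap P_{\kappa_x}x]^{<\omega}_\sqsubset \to P_{\kappa_x}x$ whose every homogeneous set lies in $\Ram_\sqsubset^{f_\beta(x)}(\Pi^1_\xi(\kappa_x, x))$. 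Amalgamate into a $\sqsubset$-regressive $h : [Y]^{<\omega}_\sqsubset \to P_\kappa A$ by setting $h(x_0, x_1, \ldots, x_n) := h_{x_0}(x_1, \ldots, x_n)$ when $x_1, \ldots, x_n \in X \cap P_{\kappa_{x_0}}x_0$, and a default regressive value $\emptyset$ otherwise. The $\Ram_\sqsubset^{\beta+2}$-positivity of $Y$ yields an $h$-homogeneous $H \subseteq Y$ with $H \in \Ram_\sqsubset^{\beta+1}(\Pi^1_\xi(\kappa, A))^+$. Applying the inductive hypothesis to $X$ and to $H$ (both $\Ram_\sqsubset^\beta$-positive), the reflection sets
\[
F_X := \{x \st X \cap P_{\kappa_x}x \in \Ram_\sqsubset^{f_\beta(x)}(\Pi^1_\xi(\kappa_x, x))^+\},
\quad
F_H := \{x \st H \cap P_{\kappa_x}x \in \Ram_\sqsubset^{f_\beta(x)}(\Pi^1_\xi(\kappa_x, x))^+\}
\]
both lie in $\Ram_\sqsubset^{\beta+1}(\Pi^1_\xi(\kappa, A))^*$, so $H \cap F_X \cap F_H \in \Ram_\sqsubset^{\beta+1}(\Pi^1_\xi(\kappa, A))^+$. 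Picking any $x_0$ in this set, a diagonal refinement of $h$ (augmenting it so that its homogeneity records both the $h_{x_0}$-values and an indicator of $X$-membership among the higher arguments) ensures that $H \cap X \cap P_{\kappa_{x_0}}x_0$ is $\Ram_\sqsubset^{f_\beta(x_0)}$-positive. By construction and $h$-homogeneity this set is $h_{x_0}$-homogeneous, contradicting the choice of $h_{x_0}$.

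For the limit step, use $\Ram_\sqsubset^\beta(\Pi^1_\xi(\kappa, A)) = \bigcup_{\eta<\beta} \Ram_\sqsubset^\eta(\Pi^1_\xi(\kappa, A))$ together with Lemma \ref{lemma_baldwin_2}(d) expressing $f_\beta$ as a pointwise union of canonical functions of smaller rank, reducing the claim to the inductive hypothesis at levels $\eta < \beta$. The main technical obstacle is the successor step, specifically extracting a single $x_0 \in H$ at which the intersection $H \cap X \cap P_{\kappa_{x_0}}x_0$ — rather than $H \cap P_{\kappa_{x_0}}x_0$ or $X \cap P_{\kappa_{x_0}}x_0$ alone — is $\Ram_\sqsubset^{f_\beta(x_0)}$-positive; since two $\Ram_\sqsubset^{\beta+1}$-positive sets need not intersect positively, the amalgamated function $h$ must be engineered so that its homogeneity already enforces a dense $X$-trace within $H$, rather than relying on a naive intersection of independently positive sets.
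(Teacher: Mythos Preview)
Your base case has a gap: to apply the $\Pi^1_\xi$-indescribability of $X$ to $(V_\kappa(\kappa,A),\in,R)$ you would need $\varphi$ to hold in that structure, but you have only established that $\varphi$ holds in each $(V_{\kappa_x}(\kappa_x,x),\in,R_x)$ for $x\in H$, and a $\Pi^1_\xi$ sentence true in unboundedly many initial segments need not hold globally. The paper avoids this entirely: the assertion ``$X\in\Pi^1_\xi(\kappa,A)^+$'' is expressible by a $\Pi^1_{\xi+1}$ sentence over $(V_\kappa(\kappa,A),\in,X)$, and since $P_\kappa A\in\Ram_\sqsubset(\Pi^1_\xi(\kappa,A))^+$ implies via Lemma~\ref{lemma_ineffability_implies_indescribability} that $P_\kappa A$ is $\Pi^1_{\xi+1}$-indescribable, the complement of $Y_0(X)$ lies directly in $\Pi^1_{\xi+1}(\kappa,A)^*\subseteq\Ram_\sqsubset(\Pi^1_\xi(\kappa,A))^*$.

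Your successor step has a more serious gap, which you yourself flag at the end but do not close. Your amalgamated $h$ lives on $[Y]^{<\omega}_\sqsubset$, yet each $h_{x}$ is defined only on tuples from $X\cap P_{\kappa_{x}}x$; since elements of $Y=Y_{\beta+1}(X)$ need not lie in $X$, the non-default clause of $h$ essentially never fires (note also that if $(x_0,\ldots,x_n)$ is $\sqsubset$-increasing then $x_1,\ldots,x_n\notin P_{\kappa_{x_0}}x_0$), so the homogeneous $H\subseteq Y$ carries no information about the $h_x$'s. Augmenting $h$ with an $X$-membership indicator does not help: homogeneity would then force $H\subseteq X$ or $H\cap X=\emptyset$, and nothing excludes the latter. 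The paper's argument is structurally different and resolves exactly this difficulty. One first encodes each witnessing function $g_x$ (your $h_x$) as a subset $Z_x\subseteq P_{\kappa_x}x$ via a fixed bijection, and uses the $\Ram_\sqsubset^{\beta+2}$-positivity of $Y$ only to obtain a $\Ram_\sqsubset^{\beta+1}$-positive $B\subseteq Y$ homogeneous for the list $\langle Z_x\rangle$, so that the $g_x$'s for $x\in B$ cohere into a single $\sqsubset$-regressive $f$ on $[X]^{<\omega}_\sqsubset$. The key move is then to apply the $\Ram_\sqsubset^{\beta+1}$-positivity of $X$ itself, not of $Y$, to obtain an $f$-homogeneous $H\subseteq X$ in $\Ram_\sqsubset^\beta(\Pi^1_\xi(\kappa,A))^+$. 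The inductive hypothesis applied to $H$ puts the reflection set $\{x: H\cap P_{\kappa_x}x\in\Ram_\sqsubset^{f_\beta(x)}(\Pi^1_\xi(\kappa_x,x))^+\}$ into $\Ram_\sqsubset^{\beta+1}(\Pi^1_\xi(\kappa,A))^*$; intersecting with $B$ yields an $x\in B$ for which $H\cap P_{\kappa_x}x\subseteq X\cap P_{\kappa_x}x$ is $\Ram_\sqsubset^{f_\beta(x)}$-positive and homogeneous for $g_x$, the desired contradiction. Your limit step is fine and matches the paper.
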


\begin{proof}
We give a proof for the case $\O=\Ram_\strsub$. The proof for $\I_\strsub$ is similar, but uses lists instead of $\strsub$-regressive functions.

Following \cite[Theorem 5.3]{MR1077260}, we proceed by induction on $\beta$.

Suppose $\beta=0$. The assumption that $P_\kappa A\in \Ram_\strsub^{\alpha+1}(\Pi^1_\xi(\kappa,A))^+$ implies that $P_\kappa A$ is $\Pi^1_{\xi+1}$-indescribable, and since the fact that $X\in\Ram_\strsub^0(\Pi^1_\xi(\kappa,A))^+=\Pi^1_\xi(\kappa,A))^+$ is expressible by a $\Pi^1_{\xi+1}$ sentence over $(V_\kappa(\kappa,A),\in,X)$, it follows the set
\[\{x\in P_\kappa A\st X\cap P_{\kappa_x}x\in\Pi^1_\xi(\kappa_x,x)^+\}\]
is in the filter $\Pi^1_{\xi+1}(\kappa,A)^*\subseteq\Ram_\strsub(\Pi^1_\xi(\kappa,A))^*$ (the last containment follows from Lemma \ref{lemma_ineffability_implies_indescribability}).

Suppose $\beta=\eta+1$. Let $C_0\subseteq P_\kappa A$ be a club such that $x\in C_0$ implies $f_\beta(x)=f_\eta(x)+1$. Suppose $X\in\Ram_\strsub^\beta(\Pi^1_\xi(\kappa,A))$. By our inductive hypothesis the set
\[\{x\in C_0\st X\cap P_{\kappa_x}x\in\Ram_\strsub^{f_\eta(x)}(\Pi^1_\xi(\kappa_x,x))^+\}\]
is in the filter $\Ram_\strsub^{\eta+1}(\Pi^1_\xi(\kappa,A))^*$ and is hence also in the filter $\Ram_\strsub^{\beta+1}(\Pi^1_\xi(\kappa,A))^*$. 

Now let
\[T=\{x\in P_\kappa A\st X\cap P_{\kappa_x}x\in\Ram_\strsub^{f_\eta(x)+1}(\Pi^1_\xi(\kappa_x,x))\}.\]
It will suffice to show that $T\in\Ram_\strsub^{\beta+1}(\Pi^1_\xi(\kappa,A))$. For a contradiction, suppose $T\in\Ram_\strsub^{\beta+1}(\Pi^1_\xi(\kappa,A))^+$. Then the set
\[Y=\{x\in C_0\st X\cap P_{\kappa_x}x\in \Ram_\strsub^{f_\eta(x)}(\Pi^1_\xi(\kappa_x,x))^+\cap\Ram_\strsub^{f_\eta(x)+1}(\Pi^1_\xi(\kappa_x,x))\}\]
is in $\Ram_\strsub^{\beta+1}(\Pi^1_\xi(\kappa,A))^+$.

For each $x\in Y$, let $g_x:[X\cap P_{\kappa_x}x]_\strsub^<\to P_{\kappa_x}x$ be a $\strsub$-regressive function with no homogeneous set in $\Ram_\strsub^{f_\eta(x)}(\Pi^1_\xi(\kappa,A))^+$.

Fix a bijection $b:(P_\kappa A)\times(P_\kappa A)\to P_\kappa A$ and note that the set
\[C_1=\{x\in P_\kappa A\st b[P_{\kappa_x}x\times P_{\kappa_x}x]=P_{\kappa_x}x\}\]
is a weak club in $P_\kappa A$. Now let $Z=Y\cap C_1$. For each $x\in Z$ let $Z_x=b[g_x]\subseteq P_{\kappa_x}x$. This defines a $Z$-list $\vec{Z}=\<Z_x\st x\in Z\>$. By assumption, there is a set $B\in \Ram_\strsub^\beta(\Pi^1_\xi(\kappa,A))^+$ homogeneous for $\vec{Z}$. Let $f=\bigcup\{g_x\st x\in B\}$. Then $f$ is $\strsub$-regressive on $[X]_\strsub^{<}$.

Since $X\in\Ram_\strsub^{\eta+1}(\Pi^1_\xi(\kappa,A))^+$, there is an $H\in P(X)\cap \Ram_\strsub^\eta(\Pi^1_\xi(\kappa,A))^+$ homogeneous for $f$. By induction, the set
\[\{x\in P_\kappa A\st H\cap P_{\kappa_x}x\in \Ram_\strsub^{f_\eta(x)}(\Pi^1_\xi(\kappa,A))^+\}\]
is in $\Ram_\strsub^\beta(\Pi^1_\xi(\kappa,A))^*$. Choose $x\in B$ such that $H\cap P_{\kappa_x}x\in\Ram_\strsub^{f_\eta(x)}(\Pi^1_\xi(\kappa_x,x))^+$. Then $f\restrict[X\cap P_{\kappa_x}x]_\strsub^{<\omega}=g_x$ and $H\cap P_{\kappa_x}x$ is homogeneous for $g_x$, a contradiction.

Now suppose $\beta\leq\alpha$ is a limit ordinal and $X\subseteq P_\kappa A$ is in $\Ram_\strsub^\beta(\Pi^1_\xi(\kappa,A))^+$. If $\eta<\beta$ then $X\in\Ram_\strsub^\eta(\Pi^1_\xi(\kappa,A))^+$ since $\Ram_\strsub^\beta(\Pi^1_\xi(\kappa,A))=\bigcup_{\eta<\beta}\Ram_\strsub^\eta(\Pi^1_\xi(\kappa,A))$. Thus, by our inductive hypothesis, for each $\eta<\beta$ the set 
\[D_\eta=\{x\in P_\kappa A\st X\cap P_{\kappa_x}x\in \Ram_\strsub^{f_\eta(x)}(\Pi^1_\xi(\kappa_x,x))^+\}\]
is in the filter $\Ram_{\strsub}^{\eta+1}(\Pi^1_\xi(\kappa,A))^*$. Thus, each $D_\eta$ is in the filter $\Ram_\strsub^\beta(\Pi^1_\xi(\kappa,A))^*$ and thus also in $\Ram_\strsub^{\beta+1}(\Pi^1_\xi(\kappa,A))^*$, which is nontrivial and strongly normal. By normality, the set 
\[\bigtriangleup_{\eta<\beta}D_\eta=\{x\in P_\kappa A\st x\in\bigcap_{\eta\in x}D_\eta\}\]
is in the filter $\Ram_\strsub^{\beta+1}(\Pi^1_\xi(\kappa,A))^*$. Applying Lemma \ref{lemma_baldwin_2}(d) to the sequence $\<f_\eta\st\eta<\beta\>$ (using a reindexing if necessary), it follows that the function $x\mapsto \bigcup_{\eta\in x}f_\eta(x)$ is canonical on $P_\kappa A$ of rank $\bigcup_{\eta<\beta}\|f_\eta\|=\beta$. Therefore, the set 
\[C=\{x\in P_\kappa A\st \bigcup_{\eta\in x}f_\eta(x)=f_\beta(x)\}\]
is club in $P_\kappa A$. Hence the set $C\cap\bigtriangleup_{\eta<\beta}D_\eta$, which is contained in 
\[\{x\in P_\kappa A\st X\cap P_{\kappa_x}x\in\Ram_\strsub^{f_\beta(x)}(\Pi^1_\xi(\kappa_x,x))^+\},\] is in the filter $\Ram_\strsub^{\beta+1}(\Pi^1_\xi(\kappa,A))^*$.
\end{proof}

\begin{corollary}
Suppose $\kappa$ is a regular uncountable cardinal, $A$ is a set of ordinals with $\kappa\leq |A|$, $\xi<\kappa$, $\alpha<|A|^+$ and $\O\in\{\I_\strsub,\Ram_\strsub\}$. If the ideal $\O^\alpha(\Pi^1_\xi(\kappa,A))$ is nontrivial then 
\[\O^\alpha(\Pi^1_\xi(\kappa,A))\subsetneq\O^{\alpha+1}(\Pi^1_\xi(\kappa,A)).\]
\end{corollary}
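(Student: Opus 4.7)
The plan is to exhibit a set lying in $\O^{\alpha+1}(\Pi^1_\xi(\kappa,A)) \setminus \O^\alpha(\Pi^1_\xi(\kappa,A))$, since the containment $\O^\alpha(\Pi^1_\xi(\kappa,A)) \subseteq \O^{\alpha+1}(\Pi^1_\xi(\kappa,A))$ was already pointed out to be immediate. First I would dispose of an easy special case: if $\O^{\alpha+1}(\Pi^1_\xi(\kappa,A))$ happens to be trivial, then it equals $P(P_\kappa A)$, whereas $\O^\alpha(\Pi^1_\xi(\kappa,A))$ is proper by hypothesis, so $P_\kappa A$ itself witnesses the strict inclusion. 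Otherwise $P_\kappa A \in \O^{\alpha+1}(\Pi^1_\xi(\kappa,A))^+$, which is precisely the hypothesis needed to invoke Theorem \ref{theorem_hierarchy_1}.

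The candidate witness I would use is the diagonal ``bad'' set
\[X_\alpha = \{x \in P_\kappa A : P_{\kappa_x}x \in \O^{f_\alpha(x)}(\Pi^1_\xi(\kappa_x,x))\},\]
i.e.\ the locus at which the $f_\alpha$-indexed version of the ideal has already collapsed. By the lemma immediately preceding Theorem \ref{theorem_hierarchy_1} (applied with its parameter ``$\beta$'' set equal to $\alpha$, which is legal because nontriviality of $\O^\alpha(\Pi^1_\xi(\kappa,A))$ gives $P_\kappa A \in \O^\alpha(\Pi^1_\xi(\kappa,A))^+$), one obtains $X_\alpha \in \O^\alpha(\Pi^1_\xi(\kappa,A))^+$, so $X_\alpha$ at least lies outside the lower ideal.

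To place $X_\alpha$ in $\O^{\alpha+1}(\Pi^1_\xi(\kappa,A))$, I would apply Theorem \ref{theorem_hierarchy_1} itself with $\beta = \alpha$ and $X = X_\alpha$. This yields that the set
\[Y = \{x \in P_\kappa A : X_\alpha \cap P_{\kappa_x}x \in \O^{f_\alpha(x)}(\Pi^1_\xi(\kappa_x,x))^+\}\]
belongs to the filter $\O^{\alpha+1}(\Pi^1_\xi(\kappa,A))^*$. The decisive observation is that $X_\alpha \cap Y = \emptyset$: for any $x \in X_\alpha$, the whole of $P_{\kappa_x}x$ lies in the ideal $\O^{f_\alpha(x)}(\Pi^1_\xi(\kappa_x,x))$, so by downward closure no subset of $P_{\kappa_x}x$, and in particular not $X_\alpha \cap P_{\kappa_x}x$, can be positive. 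Hence $X_\alpha \subseteq P_\kappa A \setminus Y$, and since $Y$ is in the dual filter this places $X_\alpha$ in $\O^{\alpha+1}(\Pi^1_\xi(\kappa,A))$, completing the argument.

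All of the structural work has been done by the two preceding results, so I do not anticipate a serious obstacle; the argument is essentially a two-cardinal rendition of Feng's classical one-cardinal diagonalization \cite{MR1077260}, with the canonical functions $f_\alpha$ of Lemma \ref{lemma_canonical} playing the role that the ordinal $\alpha$ itself plays in his one-cardinal argument.
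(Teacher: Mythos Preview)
Your proposal is correct and is exactly the derivation the paper intends: the corollary is stated without proof because it follows immediately from the preceding lemma (giving $X_\alpha\in\O^\alpha(\Pi^1_\xi(\kappa,A))^+$) together with Theorem \ref{theorem_hierarchy_1}. A marginally more direct variant is to apply Theorem \ref{theorem_hierarchy_1} with $X=P_\kappa A$ rather than $X=X_\alpha$, which gives outright that the complement of $X_\alpha$ lies in $\O^{\alpha+1}(\Pi^1_\xi(\kappa,A))^*$; but your route via $X=X_\alpha$ and the disjointness observation is equally valid.
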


Next we will generalize a theorem of Baumgartner \cite{MR0384553} and results of the first author and Peter Holy \cite{MR4594301}, by proving a theorem which establishes, among other things, that the existence of cardinals $\kappa\leq\lambda$ such that $\I^2_\strsub(I_{\kappa,\lambda})$ is nontrivial is strictly stronger in consistency strength than the existence of cardinals $\kappa\leq\lambda$ for which $\I_\strsub(\Pi^1_\beta(\kappa,\lambda))$ is nontrivial for all $\beta<\kappa$. This theorem strengthens Theorem \ref{theorem_hierarchy_1} in the case where $\O=\I_\strsub$. See the comments after the proof of Theorem \ref{theorem_strong_ineffable_hierarchy} for more information on generalizing Theorem \ref{theorem_strong_ineffability_ideal_characterization} to $\Ram_\strsub$.

\begin{theorem}\label{theorem_strong_ineffable_hierarchy}
Suppose $\kappa$ is a regular uncountable cardinal, $A$ is a set of ordinals with $\kappa\leq |A|$, $\alpha<|A|^+$ and $S\in \I_\strsub^{\alpha+1}(I_{\kappa,A})^+$. Suppose $\vec{S}=\<S_x\st x\in S\>$ is an $(S,\strsub)$-list. Let
\begin{align*}
Z = \{x \in S \mid (\exists X \subseteq S \cap P_{\kappa_x}x) &(\forall \beta < \kappa_x \ X \in \I_\strsub^{f_{\alpha}(x)}(\Pi^1_\beta(\kappa_x,x))^+) \wedge \\ 
	&(X \cup \{x\} \text{ is homog. for $\vec{S}$}) \}
\end{align*}
Then $S\setminus Z\in \I_\strsub^{\alpha+1}(I_{\kappa,A})$.
\end{theorem}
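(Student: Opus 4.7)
The plan is to argue by contradiction, inducting on $\alpha$. Assume $S\setminus Z\in \I_\sqsubset^{\alpha+1}(I_{\kappa,A})^+$ and set $T=S\setminus Z$. Since $\vec{S}\restrict T$ is a $(T,\sqsubset)$-list, the hypothesis furnishes a homogeneous set $H\subseteq T$ with $H\in\I_\sqsubset^{\alpha}(I_{\kappa,A})^+$. For each $x\in H$, the set $X_x := H\cap P_{\kappa_x}x$ is contained in $S\cap P_{\kappa_x}x$, and $X_x\cup\{x\}$ is homogeneous for $\vec{S}$ because $H$ is. Since $x\in H\subseteq T$, the candidate $X_x$ must fail the ``$\forall\beta<\kappa_x$'' clause in the definition of $Z$, so there exists $\beta_x<\kappa_x$ with $X_x\in\I_\sqsubset^{f_\alpha(x)}(\Pi^1_{\beta_x}(\kappa_x,x))$.

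The next step is to stabilize the function $x\mapsto\beta_x$. After intersecting with the club $\{x\in P_\kappa A:x\cap\kappa\in\kappa\}$ we have $\beta_x\in x$, so $x\mapsto\{\beta_x\}$ is $\sqsubset$-regressive. For $\alpha\geq 1$ the ideal $\I_\sqsubset^\alpha(I_{\kappa,A})$ is strongly normal, so this map is constant with value $\{\beta^*\}$ on some $H'\in P(H)\cap\I_\sqsubset^\alpha(I_{\kappa,A})^+$. The base case $\alpha=0$, where $H$ lies only in $I_{\kappa,A}^+$, is handled by folding the failure witnesses for $x\notin Z$ directly into the initial $(T,\sqsubset)$-list so that homogeneity itself forces stabilization, with Proposition \ref{proposition_weak_club_for_free} used to pass to a convenient weak club. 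In either case, one obtains $H'\in\I_\sqsubset^\alpha(I_{\kappa,A})^+$ and a fixed $\beta^*<\kappa$ such that $H'\cap P_{\kappa_x}x\in\I_\sqsubset^{f_\alpha(x)}(\Pi^1_{\beta^*}(\kappa_x,x))$ for every $x\in H'$, by downward closure of the ideal.

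The decisive step is to contradict the previous sentence using Theorem \ref{theorem_hierarchy_1} applied with $\O=\I_\sqsubset$, $\xi=\beta^*$, and $\beta=\alpha$: that theorem would yield $\I_\sqsubset^{\alpha+1}(\Pi^1_{\beta^*}(\kappa,A))^*$-many, and therefore $H'$-many, $x$ satisfying $H'\cap P_{\kappa_x}x\in\I_\sqsubset^{f_\alpha(x)}(\Pi^1_{\beta^*}(\kappa_x,x))^+$, immediately contradicting the stabilized assignment. The application, however, requires $H'\in\I_\sqsubset^\alpha(\Pi^1_{\beta^*}(\kappa,A))^+$ rather than merely $\I_\sqsubset^\alpha(I_{\kappa,A})^+$, and this \emph{promotion} is where I expect the main obstacle. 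I plan to secure it by rerunning the choice of $H$ in parallel across all candidates $\beta<\kappa$: augment the initial list $\vec{S}\restrict T$ with an auxiliary $\sqsubset$-regressive list that codes a $\Pi^1_\beta$-refutation so that any homogeneous set for the enlarged list is forced to be $\Pi^1_\beta$-indescribable via Lemma \ref{lemma_ineffability_implies_indescribability}, and iterate this coding through the $\I_\sqsubset^\alpha$-tower as the outer induction on $\alpha$ proceeds; limit stages are governed by Lemma \ref{lemma_baldwin_2}(d), which identifies $\|f_\alpha\|$ with $\bigcup_{\eta<\alpha}\|f_\eta\|$. This promotion step, patterned closely on the one-cardinal arguments of \cite{MR0384553, MR4206111, CodyHoly2021}, is the main technical hurdle of the proof.
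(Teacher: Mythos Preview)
Your plan hinges on invoking Theorem \ref{theorem_hierarchy_1}, and this is where it breaks. That theorem has the standing hypothesis $P_\kappa A\in\I_\sqsubset^{\alpha+1}(\Pi^1_{\beta^*}(\kappa,A))^+$, whereas here you are only given $S\in\I_\sqsubset^{\alpha+1}(I_{\kappa,A})^+$; since $I_{\kappa,A}\subsetneq\Pi^1_{\beta^*}(\kappa,A)$, the latter is strictly weaker and does not supply the hypothesis you need. Your ``promotion'' of $H'$ from $\I_\sqsubset^\alpha(I_{\kappa,A})^+$ to $\I_\sqsubset^\alpha(\Pi^1_{\beta^*}(\kappa,A))^+$ faces the same obstruction, and the coding sketch you offer (augmenting the list so that homogeneity forces $\Pi^1_\beta$-indescribability via Lemma \ref{lemma_ineffability_implies_indescribability}) does not actually do this: that lemma upgrades indescribability by two levels given homogeneous sets already lying in $\Pi^1_\xi$-positive sets, it does not manufacture $\Pi^1_{\beta^*}$-positivity of $H'$ from nothing. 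Indeed, the whole point of Theorem \ref{theorem_strong_ineffable_hierarchy} is that it \emph{strengthens} Theorem \ref{theorem_hierarchy_1} for $\I_\sqsubset$, so deriving it from Theorem \ref{theorem_hierarchy_1} should not be possible.

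The paper's argument avoids both the stabilization of $\beta_x$ and any appeal to Theorem \ref{theorem_hierarchy_1}. For each $x\in S\setminus Z$ it takes the canonical candidate $B_x=\{y\in S\cap P_{\kappa_x}x: S_y=S_x\cap P_{\kappa_y}y\}$, chooses $\xi_x<\kappa_x$ and a $(B_x,\sqsubset)$-list $\vec{B}^x$ witnessing $B_x\in\I_\sqsubset^{f_\delta(x)+1}(\Pi^1_{\xi_x}(\kappa_x,x))$, and then finds $H\in\I_\sqsubset^{\delta+1}(I_{\kappa,A})^+$ simultaneously homogeneous for $\vec{S}$, for $\langle B_x\rangle$, and for $\langle\vec{B}^x\rangle$; this glues the $\vec{B}^x$ into a single $(B,\sqsubset)$-list $\vec{B}$ with $H\subseteq B$. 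Now the \emph{inductive hypothesis} (the present theorem at level $\delta$, applied to $H$ and $\vec{B}$) yields some $x\in H$ and $X\subseteq H\cap P_{\kappa_x}x$ with $X\cup\{x\}$ homogeneous for $\vec{B}$ and $X\in\I_\sqsubset^{f_\delta(x)}(\Pi^1_\xi(\kappa_x,x))^+$ for \emph{every} $\xi<\kappa_x$ --- in particular for $\xi_x$ --- contradicting the choice of $\vec{B}^x$. The crucial difference from your plan is that the inductive hypothesis already delivers positivity for all $\xi$ at once, so no stabilization or promotion is needed; you should recast your argument to descend via the theorem's own induction rather than through Theorem \ref{theorem_hierarchy_1}.
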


\begin{proof}
We proceed by induction on $\alpha<|A|^+$. The case in which $\alpha=0$ follows firectly from an argument given by Abe \cite[Lemma 3.8]{MR2210149}, which is a straightforward generalization of Baumgartner's \cite[Theorem 4.1]{MR0384553}; the arguments for the successor case and the limit case are similar. Let us provide a proof for the successor case. The interested reader may easily piece together a proof of the limit case by consulting the following successor case and the detailed arguments in \cite{MR4594301}.

Suppose $\alpha=\delta+1<|A|^+$ is a successor ordinal, and suppose for a contradiction that $S\setminus Z\in \I_\strsub^{\delta+2}(I_{\kappa,A})^+$. By Lemma \ref{lemma_baldwin_2} we may let $C$ be a club subset of $P_\kappa A$ such that $x\in C$ implies $f_{\delta+1}(x)=f_\delta(x)+1$. The set
\[E=\{x\in S\setminus Z\st \kappa_x\text{ is inaccessible}\}\cap C\]
is in $\I^{\delta+2}(I_{\kappa,A})^+$. For each $x\in E$, let $B_x=\{y\in S\cap P_{\kappa_x}x\st S_y=S_x\cap P_{\kappa_y}y\}$. Since $B_x\cup\{x\}$ is homogeneous for $\vec{S}$ and $x\in S\setminus Z$, there is an ordinal $\xi_x<\kappa_x$ such that $B_\alpha\in \I^{f_\delta(x)+1}(\Pi^1_{\xi_x}(\kappa_x,x))$, and hence we may fix a $(B_x,\strsub)$-list $\vec{B}^x=\<b^x_y\st y\in B_x\>$ such that $\vec{B}^x$ gas no homogeneous set in $\I^{f_\delta(x)}(\Pi^1_{\xi_x}(\kappa_x,x))^+$.

Since $E\in \I^{\delta+2}(I_{\kappa,A})^+$, there is an $H\in P(E)\cap \I^{\delta+1}(I_{\kappa,A})^+$ such that whenever $y\strsub x$ and $x,y\in H$ we have $S_y=S_x\cap P_{\kappa_y}y$, $B_y=B_x\cap P_{\kappa_y}y$ and $\vec{B}^y=\vec{B}^x\restrict B_y$. Let $D=\bigcup_{x\in H}S_x$, $B=\bigcup_{x\in H}B_x$ and $\vec{B}=\bigcup_{x\in H}\vec{B}^x=\<b_x\st x\in B\>$. Since $B=\{x\in P_\kappa A\st S_x=D\cap P_{\kappa_x}x\}$, it follows that $H\subseteq B$.

Now let $A_0$ be the set of all $x\in H$ such that there is an $X\subseteq P\cap P_{\kappa_x}x$ such that
\[(\forall \xi<\kappa_x\ X\in \I^{f_\delta(x)}(\Pi^1_{\xi}(\kappa_x,x))^+) \land (X\cup\{x\}\text{ is hom. for }\vec{B}).\]
By our inductive hypothesis, $H\setminus A_0\in \I^{\delta+1}(I_{\kappa,A})$, and hence $A_0\in \I^{\delta+1}(I_{\kappa,A})^+$. Thus, there is an $x\in A_0$. Since $x\in H$, it follows by homogeneity that $\vec{B}\restrict (H\cap P_{\kappa_x}x)=\vec{B}^x\restrict H$. But, by the definition of $A_0$, and since $\xi_x<\kappa_x$, there is some $X\in P(H\cap P_{\kappa_x}x)\cap \I^{f_\delta(x)}(\Pi^1_{\xi_x}(\kappa_x,x))^+$ which is homogeneous for $\vec{B}^x$, a contradiction.
\end{proof}

At the time of writing this article, the authors did not know whether Theorem \ref{theorem_strong_ineffable_hierarchy} holds if we replace $\I_\strsub$ with $\Ram_\strsub$ and $\vec{S}$ with an $(\omega,S,\strsub)$-list. In fact, at that time, it was not known whether the corresponding result holds for the single cardinal Ramsey operator. See \cite{MR4594301} for a detailed discussion about the problems involved with generalizing Theorem \ref{theorem_strong_ineffable_hierarchy} to the Ramsey operator in the one-cardinal case. Since the current article was written, the first author, Lambie-Hanson and Zhang proved theorems analogous to Theorem \ref{theorem_strong_ineffable_hierarchy} for both the single cardinal Ramsey operator and $\Ram_\strsub$ (see \cite[Section 4]{CLHZ2024}).


\end{document}